\newtheorem{thm}{Theorem}[section]
\newtheorem{cor}[thm]{Corollary}
\newtheorem{lem}[thm]{Lemma}
\newtheorem{prop}[thm]{Proposition}
\newtheorem*{thmm}{Main Theorem}
\theoremstyle{definition}
\newtheorem{rem}[thm]{Remark}
\makeatletter\@addtoreset{case}{thm}\makeatother
\numberwithin{equation}{section}
\newcommand{\define}[1]{%
	\textit{#1}%
}
\DeclareMathOperator{\Hom}{Hom}
\DeclareMathOperator{\End}{End}
\DeclareMathOperator{\Aut}{Aut}
\DeclareMathOperator{\Id}{Id}
\DeclareMathOperator{\im}{Im}
\DeclareMathOperator{\Spec}{Spec}
\DeclareMathOperator{\sym}{Sym}
\DeclareMathOperator{\Tr}{Tr}
\DeclareMathOperator{\dvol}{dvol}
\DeclareMathOperator{\Ad}{Ad}
\DeclareMathOperator{\ad}{ad}
\DeclareMathOperator{\ind}{Index}
\DeclareMathOperator{\diag}{diag}
\DeclareMathOperator{\Cas}{Cas}
\DeclareMathOperator{\Ind}{Ind}
\DeclareMathOperator{\Ric}{Ric}
\DeclareMathOperator{\virtualdim}{virtual-dim}
\DeclareMathOperator{\ch}{ch}
\DeclareMathOperator{\Sf}{sf}
\DeclareFontFamily{U}{MnSymbolC}{}
\DeclareSymbolFont{MnSyC}{U}{MnSymbolC}{m}{n}
\DeclareFontShape{U}{MnSymbolC}{m}{n}{
	<-6>  MnSymbolC5
	<6-7>  MnSymbolC6
	<7-8>  MnSymbolC7
	<8-9>  MnSymbolC8
	<9-10> MnSymbolC9
	<10-12> MnSymbolC10
	<12->   MnSymbolC12}{}
\DeclareMathSymbol{\intprod}{\mathbin}{MnSyC}{'270}
\begin{document}
\begin{titlepage}
	\vskip 2.0cm

\begin{center}

	{\Large\bf
		Deformations of Clarke--Oliveira's Instantons on\\ Bryant--Salamon $Spin(7)$-Manifold
	}
			
		\vspace{8mm}
			
		{\large Tathagata~Ghosh \orcidlink{0000-0001-6864-0854}
			}
			\\[4mm]
			\noindent {\em School of Mathematics, University of Leeds,\\ Woodhouse Lane, Leeds, LS2 9JT, UK}\\
			{Email: \href{mailto:T.Ghosh@leeds.ac.uk}{T.Ghosh@leeds.ac.uk}}			
			\vspace{8mm}

\begin{abstract}
	In this paper we compute the deformations of Clarke--Oliveira's instantons on the Bryant--Salamon $Spin(7)$-Manifold. The Bryant--Salamon $Spin(7)$-Manifold --- the negative spinor bundle of $S^4$ --- is an asymptotically conical manifold where the link is the squashed $7$-sphere. We use the deformation theory developed by the author in a previous paper to calculate the deformations of Clarke--Oliveira's instantons and calculate the virtual dimensions of the moduli spaces.
\end{abstract}

\end{center}

\makeatletter
\renewcommand\@dotsep{10000}   
\makeatother
\tableofcontents

\end{titlepage}


\section{Introduction}\label{section1}
The aim of this paper is to study the deformations of Clarke--Oliveira's instantons on the Bryant--Salamon $Spin(7)$-Manifold and compute the virtual dimensions of the moduli spaces. The Bryant--Salamon $Spin(7)$-Manifold $\slashed{S}^-(S^4)$ is an asymptotically conical $Spin(7)$-manifold. It was the first example of a complete Riemannian manifold whose holonomy is $Spin(7)$. The manifold is equipped with a $Spin(7)$-structure: a $4$-form $\Phi$ on the manifold which is integrable, i.e., $\nabla\Phi = 0$ for the Levi--Civita connection $\nabla$. Now, given a nearly $G_2$-manifold, i.e., a Riemannian $7$-manifold equipped with a $3$-form $\phi$ satisfying $d\phi = 4*\phi$, the cone over it has an integrable $Spin(7)$ structure. Then, an asymptotically conical manifold is a $Spin(7)$-manifold whose integrable $Spin(7)$-structure is asymptotic to integrable $Spin(7)$-structure of the cone over the nearly $G_2$-manifold (usually referred to as the link). The link for the Bryant--Salamon Manifold is the squashed sphere $\Sigma^7 := \frac{Sp(2) \times Sp(1)}{Sp(1) \times Sp(1)}$ (see \cite{bryant1989construction}).\par 
An instanton on a $Spin(7)$-manifold is a connection $A$ on certain bundle on the manifold whose curvature $F_A$ satisfies the $Spin(7)$-instanton equation given by $*(\Phi \wedge F_A) = -F_A$. The first attempt to construct an instanton on the Bryant--Salamon manifold was made by Clarke in \cite{clarke2014instantons}. However, that instanton turned out to have a singularity. Later, Clarke and Oliveira in \cite{clarke-oli2020instantons} constructed non-singular instantons on the Bryant--Salamon $Spin(7)$-Manifold which we refer to as Clarke--Oliveira's instantons. Other than the FNFN instanton on $\mathbb{R}^8$ (see \cite{me2023paper}), Clarke--Oliveira's instantons are the only known instantons on asymptotically conical $Spin(7)$-manifolds.\par 
Clarke--Oliveira's instantons are a one-parameter family of instantons for a positive parameter $y_0 > 0$ on a trivial bundle. There is also a limiting instanton, obtained by taking $y_0 \to \infty$ which is on a non-trivial bundle. In this paper we study the deformations of both the one-parameter family of instantons and the limiting instanton.\par
Clarke--Oliveira's instantons are asymptotically conical instantons, i.e., they also exhibit similar asymptotic behaviour as the manifold --- they converge to an instanton on the squashed sphere at infinity. Explicitly, the Clarke--Oliveira's instantons $A$ converge to the canonical connection $A_\Sigma$ on 
$\Sigma^7$ with the fastest rate of convergence of $\nu = -2$. The moduli space $\mathcal{M}(A_\Sigma, \nu)$ of Clarke--Oliveira's instantons is the space of asymptotically conical instantons on Bryant--Salamon manifold converging to the canonical connection $A_\Sigma$ at infinity with rate $\nu \in (-2,0)$ quotiented by the weighted gauge group consisting of gauge transformations converging to identity at infinity.

The main result on the deformations of Clarke--Oliveira's instantons is given by the following theorem.
\begin{thmm}
	The virtual dimension of the moduli space of Clarke--Oliveira's one parameter family of instantons $A_{y_0}$ with decay rate $\nu \in (-2, 0)$ is given by
	\begin{align}
		\virtualdim \mathcal{M}_{y_0}(A_\Sigma, \nu) = 
			1.
	\end{align}
    The virtual dimension of the moduli space of Clarke--Oliveira's limiting instanton $A_{\text{lim}}$ with decay rate $\nu \in (-2, 0)$ is given by
	\begin{align}
		\virtualdim \mathcal{M}_{\text{lim}}(A_\Sigma, \nu) = 
			-1.
	\end{align}
\end{thmm}
To calculate the deformations of the instantons and calculate the virtual dimensions, we use the deformation theory of asymptotically conical $Spin(7)$-instantons developed in \cite{me2023paper}, very briefly described as follows.\par
Let $X$ be an asymptotically conical manifold with link $\Sigma$ and $A$ be an instanton on a principal $G$-bundle $P \to X$. Consider two Dirac operators: $\slashed{\mathfrak{D}}_A^-$ on $X$ twisted by the instanton $A$, and the Dirac operator $\slashed{\mathfrak{D}}_{A_\Sigma}$ on $\Sigma$ twisted by the connection $A_\Sigma$.

\begin{thm}\label{indjump}\cite{me2023paper}
    The twisted Dirac operator
    $$\slashed{\mathfrak{D}}_A^- : W^{k+1,2}_{\nu-1}(\slashed{S}^-(X) \otimes \mathfrak{g}_P) \to W^{k,2}_{\nu-2}(\slashed{S}^+(X) \otimes \mathfrak{g}_P)$$
    is Fredholm if $\nu+\frac{5}{2} \in \mathbb{R} \setminus\Spec\slashed{\mathfrak{D}}_{A_\Sigma}$. For two weights $\nu, \nu'$ with $\nu \leq \nu'$ for which the corresponding Dirac operators are Fredholm, we have
    $$\ind_{\nu'}\slashed{\mathfrak{D}}_A^- - \ind_{\nu}\slashed{\mathfrak{D}}_A^- = \sum\limits_{\nu < \lambda < \nu'}\dim \ker\left(\slashed{\mathfrak{D}}_{A_\Sigma}-\lambda-\frac{5}{2}\right).$$
\end{thm}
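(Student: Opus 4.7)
The plan is to derive both assertions from the Lockhart--McOwen theory of elliptic operators on asymptotically conical manifolds, after identifying the indicial roots of the conical model of $\slashed{\mathfrak{D}}_A^-$ with shifted eigenvalues of the link Dirac operator $\slashed{\mathfrak{D}}_{A_\Sigma}$.

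\textbf{Step 1: Reduction to the cone.} On the conical end $(R,\infty)\times\Sigma$ of $X$, both the metric and the connection $A$ are asymptotic (at a rate strictly less than $-1$) to the cone metric and to the pullback of $A_\Sigma$. Consequently the difference between $\slashed{\mathfrak{D}}_A^-$ and the corresponding model operator $\slashed{\mathfrak{D}}_{A_\Sigma,\mathrm{cone}}^-$ is of lower weight, so the first inherits the Fredholm properties of the second on the weighted Sobolev scale. I would quote the standard perturbation argument (gluing in a compactly supported modification near the core of $X$ and using that the remainder is relatively compact between the weighted spaces).

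\textbf{Step 2: Separation of variables on the cone.} Using the identifications $\slashed{S}^\pm(C(\Sigma))\cong \pi^*\slashed{S}(\Sigma)$ and the fact that the $Spin(7)$-structure is radially parallel, I would write
\begin{align*}
\slashed{\mathfrak{D}}_{A_\Sigma,\mathrm{cone}}^- \;=\; \partial_r \,+\, \tfrac{1}{r}\bigl(\slashed{\mathfrak{D}}_{A_\Sigma}-c\bigr),
\end{align*}
where $c$ is the dimensional constant coming from the conformal rescaling of spinors along the radial factor. Bookkeeping the weight convention for $W^{k,2}_\nu$ used in the paper gives $c=\tfrac{5}{2}$, so the indicial roots of the cone operator are exactly $\{\lambda-\tfrac{5}{2}:\lambda\in\Spec\slashed{\mathfrak{D}}_{A_\Sigma}\}$.

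\textbf{Step 3: Fredholm criterion and index jump.} After the substitution $r=e^s$, the cone operator becomes translation invariant on the cylinder $\mathbb{R}\times\Sigma$; Fourier analysis in $s$ then yields Fredholmness of the weighted operator precisely when $\nu+\tfrac{5}{2}\notin\Spec\slashed{\mathfrak{D}}_{A_\Sigma}$. The Lockhart--McOwen index-change formula identifies the jump across each critical weight with the multiplicity of the corresponding eigenvalue, giving
\begin{align*}
\ind_{\nu'}\slashed{\mathfrak{D}}_A^- - \ind_{\nu}\slashed{\mathfrak{D}}_A^- \;=\; \sum_{\nu<\lambda<\nu'}\dim\ker\Bigl(\slashed{\mathfrak{D}}_{A_\Sigma}-\lambda-\tfrac{5}{2}\Bigr).
\end{align*}

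\textbf{Main obstacle.} The principal technical point is Step 2: deriving the precise radial form of the cone Dirac operator and, in particular, pinning down the constant $c=\tfrac{5}{2}$. This requires a careful choice of trivialization of the spinor bundle on the cone, tracking the Clifford action of $\partial_r$ under the decomposition of spinors into $\slashed{S}^\pm$, and matching the resulting formula with the weighted Sobolev convention used to define the Fredholm domain. Once this dimensional constant is pinned down, the rest of the statement reduces to citing the general theory.
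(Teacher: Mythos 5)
The paper does not actually prove Theorem \ref{indjump}; it quotes it verbatim from the author's earlier paper \cite{me2023paper}, so there is no in-paper proof to compare against. That said, your outline is the standard route for establishing this kind of statement, and it is the route the cited reference follows: reduce to the exact conical model by a relatively compact perturbation, write the cone Dirac operator in radial form $\partial_r + \tfrac{1}{r}\bigl(\slashed{\mathfrak{D}}_{A_\Sigma} - c\bigr)$, read off the indicial roots, and invoke the Lockhart--McOwen Fredholm criterion and index-change formula. Your bookkeeping of $c=\tfrac{5}{2}$ is consistent: on an $8$-dimensional cone the radial Dirac operator carries the term $\tfrac{7}{2r}$, and the offset of $-1$ built into the source weight $W^{k+1,2}_{\nu-1}$ shifts the critical set so that Fredholmness fails precisely when $\nu + \tfrac{5}{2} \in \Spec\slashed{\mathfrak{D}}_{A_\Sigma}$, matching the statement. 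You have correctly isolated the one genuinely delicate point (the trivialization of $\slashed{S}^\pm$ along the cone and the resulting dimensional constant), but you leave it as a promissory note; a full proof would need to exhibit the bundle identification $\slashed{S}^\pm(C(\Sigma)) \cong \pi^*\slashed{S}(\Sigma)$ explicitly, check the sign and constant in the Clifford multiplication by $\partial_r$, and verify that the weighted-measure convention of the paper (weights attached to $r^{-\nu}$ times a power of $r$ from the volume form) produces exactly the stated shift rather than, say, $\tfrac{7}{2}$. With that computation supplied, the argument is complete and is essentially the same proof as the cited source.
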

Here, $W^{k,p}_{\nu}(\slashed{S}^\mp(X) \otimes \mathfrak{g}_P)$ is the weighted Sobolev space of sections of the negative (positive) spinor bundle $\slashed{S}^\mp(X)$ twisted by the associated adjoint vector bundle $\mathfrak{g}_P$ of $P$.\par
Let $\mathcal{I}(A, \nu)$ and $\mathcal{O}(A, \nu)$ be the \define{space of infinitesimal deformations} and  \define{obstruction space} respectively, that is, the kernel and cokernel respectively of the twisted Dirac operator $\slashed{\mathfrak{D}}_A^-$ corresponding to the rate $\nu$. Let $\mathscr{D}(\slashed{\mathfrak{D}}_A^-)$ be the set of rates $\nu$ such that $\nu+\frac{5}{2} \in \Spec\slashed{\mathfrak{D}}_{A_\Sigma}$. The main theorem for the deformations of AC $Spin(7)$-instantons in given as follows.
\begin{thm}\cite{me2023paper}\label{mainthm}
	Let $A$ be an AC $Spin(7)$-instanton asymptotic to a nearly $G_2$ instanton $A_\Sigma$. Moreover, Then, for $\nu \in (\mathbb{R} \setminus \mathscr{D}(\slashed{\mathfrak{D}}_A^-)) \cap (-6,0)$, there exists an open neighbourhood $\mathcal{U}(A, \nu)$ of $0$ in $\mathcal{I}(A, \nu)$ and a smooth map $\kappa : \mathcal{U}(A, \nu) \to \mathcal{O}(A, \nu)$, with $\kappa(0) = 0$, such that an open neighbourhood of $0 \in \kappa^{-1}(0)$ is homeomorphic to an open neighbourhood of $A$ in $\mathcal{M}(A_\Sigma, \nu)$. Thus, the virtual dimension of the moduli space is given by the index of the Dirac operator $\slashed{\mathfrak{D}}_A^-$. Moreover, $\mathcal{M}(A_\Sigma, \nu)$ is a smooth manifold if $\mathcal{O}(A, \nu) = \{0\}$.
\end{thm}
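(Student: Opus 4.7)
The plan is to build a Kuranishi-type local model at $A$ by combining gauge fixing with the Banach-space implicit function theorem, throughout in the weighted Sobolev framework. First, a nearby connection $A+a$ with $a \in \Omega^1(X,\mathfrak{g}_P)$ is a $Spin(7)$-instanton exactly when $\pi^2_7 F_{A+a} = \pi^2_7 d_A a + \tfrac{1}{2}\pi^2_7[a,a] = 0$, where $\pi^2_7$ is the projection onto the $7$-dimensional $Spin(7)$-irreducible summand of $\Lambda^2 T^*X$. To kill gauge symmetry I impose the Coulomb slice $d_A^* a = 0$ with respect to the weighted gauge group of transformations decaying to the identity. Under the spinorial identifications $\slashed{S}^+ \cong \Lambda^1$ and $\slashed{S}^- \cong \Lambda^0 \oplus \Lambda^2_7$, the combined operator $(d_A^*, \pi^2_7 d_A)$ is precisely $\slashed{\mathfrak{D}}_A^-$, so the deformation complex is elliptic with a single quadratic nonlinearity $Q(a) := \tfrac{1}{2}\pi^2_7[a,a]$.

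Next I would install the analytic package on the weighted spaces $W^{k+1,2}_{\nu-1}(\slashed{S}^-(X)\otimes\mathfrak{g}_P) \to W^{k,2}_{\nu-2}(\slashed{S}^+(X)\otimes\mathfrak{g}_P)$. Theorem~\ref{indjump} already gives that $\slashed{\mathfrak{D}}_A^-$ is Fredholm whenever $\nu \notin \mathscr{D}(\slashed{\mathfrak{D}}_A^-)$. The two-sided restriction $\nu \in (-6,0)$ plays complementary roles: the upper bound $\nu < 0$ forces $\mathcal{I}(A,\nu)$ to consist only of decaying deformations, ruling out non-decaying constant infinitesimal gauge transformations and making the Coulomb slice a genuine transversal; the lower bound $\nu > -6$ is exactly what ensures that the weighted Sobolev multiplication $W^{k+1,2}_{\nu-1} \times W^{k+1,2}_{\nu-1} \hookrightarrow W^{k,2}_{\nu-2}$ is continuous on the $8$-dimensional AC geometry, so that $Q$ is a well-defined smooth quadratic map between the spaces used for the linearisation.

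With this set up, I would run the standard Kuranishi construction. Take $L^2$-orthogonal splittings $W^{k+1,2}_{\nu-1} = \mathcal{I}(A,\nu) \oplus \mathcal{I}(A,\nu)^\perp$ and $W^{k,2}_{\nu-2} = \im\slashed{\mathfrak{D}}_A^- \oplus \mathcal{O}(A,\nu)$, let $G$ be a bounded right inverse of $\slashed{\mathfrak{D}}_A^-$ on $\im\slashed{\mathfrak{D}}_A^-$, and let $\pi_{\im}, \pi_{\mathcal{O}}$ be the projections onto the two factors of the target. The gauged instanton equation $\slashed{\mathfrak{D}}_A^- a + Q(a) = 0$ is then equivalent to the system
\begin{align*}
    a \;&=\; \xi \,-\, G\bigl(\pi_{\im} Q(a)\bigr), \qquad \xi \in \mathcal{U}(A,\nu) \subset \mathcal{I}(A,\nu),\\
    \kappa(\xi) \;&:=\; \pi_{\mathcal{O}} Q\bigl(a(\xi)\bigr) \;=\; 0 .
\end{align*}
Quadraticity and smoothness of $Q$ make the first equation a contraction on a small ball in $\mathcal{I}(A,\nu)^\perp$, producing a unique smooth solution $a(\xi)$ depending smoothly on $\xi$. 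The second line defines the smooth Kuranishi map $\kappa : \mathcal{U}(A,\nu) \to \mathcal{O}(A,\nu)$ with $\kappa(0) = 0$. A weighted slice theorem for the gauge group then identifies $\kappa^{-1}(0)$ near $0$ with an open neighbourhood of $[A]$ in $\mathcal{M}(A_\Sigma,\nu)$. The virtual-dimension statement is immediate from $\dim \mathcal{I}(A,\nu) - \dim \mathcal{O}(A,\nu) = \ind \slashed{\mathfrak{D}}_A^-$, and when $\mathcal{O}(A,\nu) = \{0\}$ the implicit function theorem directly yields smoothness of $\mathcal{M}(A_\Sigma,\nu)$ near $[A]$.

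The main obstacle is the weighted slice theorem paired with the multiplication estimates: one must show that the Coulomb condition is a genuine local slice for the weighted gauge group, which requires surjectivity of the scalar operator $d_A^* d_A$ on the appropriate weighted Sobolev space and hence that $\nu$ avoids the critical set of this Laplacian as well, and one must verify the weighted Sobolev multiplications used throughout the contraction argument are uniformly bounded with the chosen exponents. Both are delicate because the admissible rates are controlled by the spectrum of asymptotic operators on the link $\Sigma^7$, so every decay rate has to be tracked consistently through the nonlinear iteration; the restriction $\nu \in (-6, 0)$ is precisely what makes this bookkeeping close.
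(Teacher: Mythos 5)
There is nothing to compare against inside this paper: Theorem~\ref{mainthm} is imported from \cite{me2023paper} and is stated here without proof. Your outline --- Coulomb gauge fixing, identification of the gauge-fixed linearisation $(d_A^*,\pi^2_7 d_A)$ with the negative twisted Dirac operator, weighted Fredholm theory from Theorem~\ref{indjump}, a contraction-mapping solution of the component of the equation lying in $\im\slashed{\mathfrak{D}}_A^-$, and the finite-dimensional obstruction map $\kappa$ --- is the standard Kuranishi route and is essentially the argument of the cited reference, so structurally your proposal is on target.

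Two points need correcting. First, your spinor identifications are swapped relative to the paper's conventions: $\slashed{\mathfrak{D}}_A^-$ maps sections of $\slashed{S}^-(X)\otimes\mathfrak{g}_P$ to sections of $\slashed{S}^+(X)\otimes\mathfrak{g}_P$, so if it is to be $(d_A^*,\pi^2_7 d_A)$ acting on $1$-forms you need $\slashed{S}^-\cong\Lambda^1$ and $\slashed{S}^+\cong\Lambda^0\oplus\Lambda^2_7$, not the other way around. Second, and more substantively, your explanation of the hypothesis $\nu\in(-6,0)$ is wrong. Continuity of the quadratic term as a map $W^{k+1,2}_{\nu-1}\times W^{k+1,2}_{\nu-1}\to W^{k,2}_{\nu-2}$ requires $2(\nu-1)\le\nu-2$, i.e.\ $\nu\le 0$: it is the \emph{upper} bound that controls the nonlinearity, and no lower bound on $\nu$ can be ``exactly what ensures'' it, since making $\nu$ more negative only improves the decay of $[a\wedge a]$. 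The natural role of the lower bound is different: $-6=2-8$ is the critical indicial threshold of the (asymptotically canonical) scalar Laplacian $d_A^*d_A$ on $\Omega^0(\mathfrak{g}_P)$ over the cone, and for rates in $(2-8,0)$ this Laplacian has no indicial roots, which is precisely what makes the Coulomb condition a genuine local slice for the weighted gauge group and lets the deformation complex be folded into the single operator $\slashed{\mathfrak{D}}_A^-$. You do flag the slice theorem as the main obstacle, but you attribute the interval $(-6,0)$ to the multiplication estimate; as written, the justification of the hypothesis that drives the whole construction does not close, even though the overall architecture is the right one.
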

The rest of this article is organised as follows. After reconstructing the Bryant--Salamon metric and the Clarke--Oliveira instanton using homogeneous space techniques in sections \ref{section2} and \ref{section3} respectively, we calculate the eigenvalues of the Dirac operator $\slashed{\mathfrak{D}}_{A_\Sigma}$ on the squashed sphere in section \ref{section4}. This enables us to identify the weights for which the Dirac operator is not Fredholm, as well as the jump in the index. However, we still need to compute the indices of the Dirac operators corresponding to a particular weight. This is addressed in section \ref{section5}, where we calculate the indices corresponding to the weight $-5/2$.
 
\section*{Acknowledgment}
I would like to thank my PhD supervisor Derek Harland for his guidance on this project. Special thanks to Johannes Nordstr\"om for his helpful comments and discussions.

\section{Bryant--Salamon \texorpdfstring{$Spin(7)$}{Spin(7)}-Manifold}\label{section2}
In this section, we derive the Bryant--Salamon metric using homogeneous space techniques, where we identify the link --- the squashed $7$-sphere --- with the homogeneous space $\frac{Sp(2) \times Sp(1)}{Sp(1) \times Sp(1)}$.
\subsection{The Squashed \texorpdfstring{$7$}{7}-Sphere}
Friedrich--Kath--Moroianu--Semmelmann in \cite{friedrich1997paper} have classified all compact, simply connected homogeneous nearly $G_2$ manifolds. As a homogeneous space, the nearly $G_2$-manifold --- the squashed $7$-sphere can be written as $\Sigma^7 := \frac{Sp(2) \times Sp(1)}{Sp(1) \times Sp(1)}$. 
Denote
$$Sp(1)_u := \left\{\left(\begin{pmatrix}
		g &0\\ 0 &1
	\end{pmatrix}, 1\right) : g \in Sp(1)\right\}, \ \ \ Sp(1)_d := \left\{\left(\begin{pmatrix}
		1 &0\\ 0 &g
	\end{pmatrix}, g\right) : g \in Sp(1)\right\}.$$
 Then,
 $$\mathfrak{sp}(1)_u \oplus \mathfrak{sp}(1)_d = \left\{\left(\begin{pmatrix}
		x &0\\ 0 &y
\end{pmatrix}, y\right) : x, y \in \mathfrak{sp}(1)\right\}.$$
We have a decomposition of the Lie algebra $\mathfrak{sp}(2) \oplus \mathfrak{sp}(1)$ as
$$\mathfrak{sp}(2) \oplus \mathfrak{sp}(1) = \mathfrak{sp}(1)_u \oplus \mathfrak{sp}(1)_d \oplus \mathfrak{m}.$$
We want to find $\mathfrak{m} = (\mathfrak{sp}(1)_u \oplus \mathfrak{sp}(1)_d)^\perp$, where the orthogonality is with respect to the Killing form, and choose a basis. We note that $\mathfrak{m} \cong T_p\Sigma \cong V_p \oplus H_p \cong \im \mathbb{H} \oplus \mathbb{H}$, where $V_p$ is the vertical space and $H_p$ is the horizontal space with dimensions $3$ and $4$ respectively, corresponding to the Hopf fibration $S^7 \to S^4$. Then, using the Killing form, we have,
$$\im \mathbb{H} \cong \left\{\left(\begin{pmatrix}
	0 &0\\ 0 &2z
\end{pmatrix}, -3z\right) : z \in \mathfrak{sp}(1)\right\}$$
and, we choose a basis
\begin{align}\label{Ibasis}
    I_1 = \left(\begin{pmatrix}
	0 &0\\ 0 &2i
\end{pmatrix}, -3i\right),\ I_2 = \left(\begin{pmatrix}
	0 &0\\ 0 &2j
\end{pmatrix}, -3j\right),\ I_3 = \left(\begin{pmatrix}
	0 &0\\ 0 &2k
\end{pmatrix}, -3k\right).
\end{align}
Moreover,
$$\mathbb{H} = \left\{\left(\begin{pmatrix}
	0 &b\\ -b^\dagger &0
\end{pmatrix}, 0\right) : b \in \mathbb{H}\right\}$$
and, we choose a basis
\begin{align}
    I_4 = \left(\begin{pmatrix}
	0 &1\\ -1 &0
\end{pmatrix}, 0\right),\ I_5 = \left(\begin{pmatrix}
	0 &-i\\ -i &0
\end{pmatrix}, 0\right),\ I_6 = \left(\begin{pmatrix}
	0 &-j\\ -j &0
\end{pmatrix}, 0\right),\ I_7 = \left(\begin{pmatrix}
	0 &-k\\ -k &0
\end{pmatrix}, 0\right).
\end{align}
Denote the dual basis of $I_a$ by $e^a$ for $a = 1, \dots, 7$.\par 
Then $I_1, \dots, I_7$ together with
\begin{align}
    I_8 = \left(\begin{pmatrix}
	i & 0\\ 0 &0
\end{pmatrix}, 0\right),\ I_9 = \left(\begin{pmatrix}
	j &0\\ 0 &0
\end{pmatrix}, 0\right),\ I_{10} = \left(\begin{pmatrix}
	k &0\\ 0 &0
\end{pmatrix}, 0\right),\nonumber\\
I_{11} = \left(\begin{pmatrix}
	0 & 0\\ 0 &i
\end{pmatrix}, i\right),\ I_{12} = \left(\begin{pmatrix}
	0 &0\\ 0 &j
\end{pmatrix}, j\right),\ I_{13} = \left(\begin{pmatrix}
	0 &0\\ 0 &k
\end{pmatrix}, k\right)
\end{align}
form a basis of $\mathfrak{sp}(2) \times \mathfrak{sp}(1)$. Our objective is to calculate the $Sp(2) \times Sp(1)$-invariant metric $g$, three-form $\phi$ and $\psi = *\phi$ on $\Sigma$. We note that this corresponds to calculating the $Sp(1)_u \times Sp(1)_d$-invariant metric $g$, three-form $\phi$ and $\psi = *\phi$ on $\mathfrak{m}$. We consider an ansatz for $\phi$ given by
\begin{align}\label{sqphi}
    \phi = \alpha^3 e^{123} - \alpha\beta^2(e^1\wedge \omega_1 + e^2\wedge \omega_2 + e^3\wedge \omega_3)
\end{align}
where $\omega_1, \omega_2, \omega_3$ forms a basis for $\Lambda^2_+\mathbb{H}^*$. Explicitly, we take $\omega_1 = e^{45}+e^{67}, \omega_2 = e^{46}-e^{57}, \omega_3 = e^{47}+e^{56}$. Then, we can write $\psi = *\phi$, the metric $g$ and the volume form as
$$\psi = \frac{1}{6}\beta^4(\omega_1 \wedge \omega_1 + \omega_2 \wedge \omega_2 + \omega_3 \wedge \omega_3) - \alpha^2\beta^2(e^{12}\wedge\omega_3 + e^{23}\wedge\omega_1 + e^{31}\wedge\omega_2)$$
$$g = \alpha^2 \sum\limits_{i = 1}^3 e^i \otimes e^i + \beta^2 \sum\limits_{j = 4}^7 e^j \otimes e^j$$
and $\dvol = \alpha^3\beta^4e^{1234567}$ respectively. Hence,
\begin{align*}
    \phi &= \alpha^3 e^{123} - \alpha\beta^2(e^{145} + e^{167} + e^{246} - e^{257} + e^{347} + e^{356}),\\
    \psi &= \beta^4e^{4567} - \alpha^2\beta^2(e^{1247}+e^{1256} + e^{2345}+e^{2367} - e^{1346} + e^{1357}).
\end{align*}
It is easy to verify that $\phi$ is indeed $Sp(1)_u \times Sp(1)_d$-invariant. We need to find $\alpha$ and $\beta$ such that the metric determined by $\phi$ is the squashed metric, which is a nearly parallel metric. That is, we need to solve $d\phi = 4\psi$.\par
Let $f^C_{AB}$ be the structure constants for the basis $I_A$ dual to $e^A$, i.e., $[I_A, I_B] = f^C_{AB}I_C$. From the Maurer--Cartan equation $de^a = -f^a_{ib}e^i \wedge e^b - \frac{1}{2}f^a_{bc}e^b \wedge e^c$ for $i \in \{8, \dots, 13\}$ and $\ a,b,c \in \{1, \dots, 7\}$, and explicitly calculating the structure constants, $d\phi = 4\psi$ implies $\alpha = 3, \beta = \pm\frac{3}{\sqrt{5}}$.
Hence,
\begin{align}\label{sqmetric}
    g = 9\sum\limits_{i = 1}^3 e^i \otimes e^i + \frac{9}{5} \sum\limits_{j = 4}^7 e^j \otimes e^j.
\end{align}
is the ``squashed" metric on $\Sigma^7$.
An orthonormal basis of $\mathfrak{m}$ is given by
$$\widehat{I}_1 = \frac{1}{3}\left(\begin{pmatrix}
	0 &0\\ 0 &2i
\end{pmatrix}, -3i\right),\ \widehat{I}_2 = \frac{1}{3}\left(\begin{pmatrix}
	0 &0\\ 0 &2j
\end{pmatrix}, -3j\right),\ \widehat{I}_3 = \frac{1}{3}\left(\begin{pmatrix}
	0 &0\\ 0 &2k
\end{pmatrix}, -3k\right).$$
$$\widehat{I}_4 = \frac{\sqrt{5}}{3}\left(\begin{pmatrix}
	0 &1\\ -1 &0
\end{pmatrix}, 0\right),\ \widehat{I}_5 = \frac{\sqrt{5}}{3}\left(\begin{pmatrix}
	0 &-i\\ -i &0
\end{pmatrix}, 0\right),$$
$$\widehat{I}_6 = \frac{\sqrt{5}}{3}\left(\begin{pmatrix}
	0 &-j\\ -j &0
\end{pmatrix}, 0\right),\ \widehat{I}_7 = \frac{\sqrt{5}}{3}\left(\begin{pmatrix}
	0 &-k\\ -k &0
\end{pmatrix}, 0\right).$$
We denote the dual basis by $\widehat{e}^i$ for $i = 1, \dots, 7$.
\subsection{The Bryant--Salamon Metric}
We just studied the squashed sphere 
$\Sigma^7 := \frac{Sp(2) \times Sp(1)}{Sp(1) \times Sp(1)}$ as a nearly $G_2$ manifold where the $G_2$-structure is given by (\ref{sqphi}).\par
Now, we consider $(0, \infty) \times \Sigma^7$ equipped with the $Spin(7)$-structure $\Phi = dr \wedge \phi + \psi$ where we consider $\alpha, \beta$ (and hence $\phi, \psi$) as functions of $r$. The metric is given by 
\begin{align}\label{conemet}
    g = dr^2 + \alpha(r)^2 \sum\limits_{i = 1}^3 e^i \otimes e^i + \beta(r)^2 \sum\limits_{j = 4}^7 e^j \otimes e^j.
\end{align}
The metric has holonomy $Spin(7)$ iff $\Phi$ is closed. Then, we have, $\displaystyle\frac{\partial\psi}{\partial r} = d_\Sigma\phi$, which implies,
which implies,
\begin{align}
    &\ \ \frac{d \beta^2}{d r} = \frac{6}{5}\alpha\label{parb3r}\\
    &\Rightarrow \frac{d \beta}{d r} = \frac{3\alpha}{5\beta}.\label{parbeta}
\end{align}
and 
\begin{align}\label{paralpha}
    \frac{d \alpha}{d r} = \frac{25\beta^2 - 2\alpha^2}{5\beta^2}
\end{align}
Hence,
\begin{align*}
    \frac{d \beta}{d \alpha} = \frac{3\alpha\beta}{25\beta^2 - 2\alpha^2}.
\end{align*}
This is a homogeneous ordinary differential equation. The solution is $\beta^4(5\beta^2-\alpha^2)^3 = C$.\par 
Now, with the initial condition $\alpha(0) = 0$ and $\beta(0) =: \beta_0$, we have $\beta^4(5\beta^2-\alpha^2)^3 = \beta_0^{10}$, and $\alpha, \beta$ are both strictly increasing for $r >0$. It can be shown that the metric (\ref{conemet}) on $(0, \infty) \times \Sigma^7$ can be smoothly extended over $((0, \infty) \times \Sigma^7) \cup S^4$.\par 
Now,
\begin{align}\label{alphabeta}
    &\ \ \beta^4(5\beta^2-\alpha^2)^3 = \beta_0^{10} \Rightarrow \alpha^2 = \left(5 - (\beta_0\beta^{-1})^{\frac{10}{3}}\right)\beta^2.
\end{align}
Moreover,
\begin{align*}
    dr^2 &= \left(\frac{\partial r}{\partial \beta}\right)^2d\beta^2 = \frac{25}{9}\frac{1}{5 - (\beta_0\beta^{-1})^{\frac{10}{3}}}d\beta^2.
\end{align*}
Moreover, from (\ref{parb3r}), We note that
\begin{align}\label{intalpha}
    \beta^2(r) = \beta_0^2 + \frac{6}{5}\int_0^r\alpha(s)\ ds.
\end{align}
Hence, considering $\beta$ as an independent variable, the metric (\ref{conemet}) can be written as
\begin{align}\label{bsmetric}
    g = \frac{25}{9}\frac{1}{5 - (\beta_0\beta^{-1})^{\frac{10}{3}}}d\beta^2 + \left(5 - (\beta_0\beta^{-1})^{\frac{10}{3}}\right)\beta^2\sum\limits_{i = 1}^3 e^i \otimes e^i + \beta^2 \sum\limits_{j = 4}^7 e^j \otimes e^j
\end{align}
which is the \define{Bryant--Salamon metric} on $((0, \infty) \times \Sigma^7) \cup S^4 \cong \slashed{S}^-(S^4)$. Thus, $\slashed{S}^-(S^4)$ is an asymptotically conical $Spin(7)$-manifold over the link squashed sphere with rate $-10/3$.

\section{Clarke-Oliveira's Instantons}\label{section3}
In this section we reconstruct Clarke--Oliveira's instantons \cite{clarke-oli2020instantons} using homogeneous space techniques.\par
Let $W_i$ be the unique irreducible representation of $SU(2) \cong Sp(1)$ of dimension $(i+1)$. Then,\\
$W_0 \equiv$ Trivial representation ($\dim W_0 = 1$),\\
$W_1 \equiv$ Standard representation ($\dim W_1 = 2$),\\
$W_2 \equiv$ Adjoint representation ($\dim W_2 = 3$).\par
Let $W_i^u$ be an irreducible representation of $Sp(1)_u$ and $W_i^d$ be an irreducible representation of $Sp(1)_d$. Then $W_{(i, j)} := W_i^u \otimes W_j^d$ is an irreducible representation of $Sp(1)_u \times Sp(1)_d$. Clearly, $\dim W_{(i,j)} = (i+1)(j+1)$. Then 
\begin{align}\label{m-as-irrep}
    \mathfrak{m} = W_{(1,1)} \oplus W_{(0,2)}.
\end{align}
is the decomposition of $\mathfrak{m}$ into irreducible representations of $Sp(1)_u \times Sp(1)_d$.\par
Consider the gauge group $Sp(1) \cong SU(2)$. We want to identify all homogeneous $SU(2)$-bundles over the squashed sphere $\Sigma^7 := \frac{Sp(2) \times Sp(1)}{Sp(1) \times Sp(1)}$, and identify their spaces of invariant connections. Then we have three isotropy homomorphisms from $Sp(1)_u \times Sp(1)_d$ to $Sp(1)$, namely
\begin{align*}
    \lambda_0 : Sp(1)_u \times Sp(1)_d &\to Sp(1)\\
    \left(\begin{pmatrix}
	g_1 &0\\ 0 &g_2
\end{pmatrix}, g_2\right) &\mapsto 1\\
\lambda_1 : Sp(1)_u \times Sp(1)_d &\to Sp(1)\\
    \left(\begin{pmatrix}
	g_1 &0\\ 0 &g_2
\end{pmatrix}, g_2\right) &\mapsto g_1\\
\lambda_2 : Sp(1)_u \times Sp(1)_d &\to Sp(1)\\
    \left(\begin{pmatrix}
	g_1 &0\\ 0 &g_2
\end{pmatrix}, g_2\right) &\mapsto g_2.
\end{align*}
Then, the bundles $P_i = (Sp(2) \times Sp(1)) \times_{\lambda_i} Sp(1)$ for $i = 1,2,3$ are the only homogeneous $SU(2)$-bundles over $\Sigma^7$. From Wang's theorem \cite{wang1958}, it follows that the invariant connections on $P_i$ correspond to the $Sp(1) \times Sp(1)$-equivariant homomorphisms
$$\Lambda_i : (\mathfrak{m}, \ad) \to (\mathfrak{sp}(1), \ad \circ \lambda_i).$$
Now,
$$\ad \circ \lambda_i : \mathfrak{sp}(1)_u \oplus \mathfrak{sp}(1)_d \to \End(\mathfrak{sp}(1)),$$
Then,
$$\ad \circ \lambda_0(X, Y)Z = \ad(0)Z = 0.$$
Hence, the map $\Lambda_0$ is equivalent to a map
$$W_{(1,1)} \oplus W_{(0,2)} \to W_{(0,0)}$$
and so, must be trivial. Moreover,
$$\ad \circ \lambda_1(X, Y)Z = \ad(X)Z = [X,Z].$$
Hence, the map $\Lambda_1$ equivalent to a map
$$W_{(1,1)} \oplus W_{(0,2)} \to W_{(2,0)}$$
is again trivial. Finally,
$$\ad \circ \lambda_2(X, Y)Z = \ad(Y)Z = [Y,Z].$$
Hence, the map $\Lambda_2$ can be described as follows,
$$W_{(1,1)} \oplus W_{(0,2)} \to W_{(0,2)}.$$
That is, by Schur's lemma, $\Lambda_2|_{W_{(1,1)}}$ is trivial map, whereas $\Lambda_2|_{W_{(0,2)}}$ is the map
$$\varphi\cdot\Id : W_{(0,2)} \to W_{(0,2)}$$
for some real number $\varphi$.\par 
We note that for $\lambda_0$ we get the flat connection, and for $\lambda_1$, the canonical connection. Thus, these two cases fail to give us anything interesting. Hence, we ignore these two cases. We rename $\lambda_2$ to be $\lambda$, $\Lambda_2$ to be $\Lambda$ and the corresponding bundle $P_2$ to be $P$.\par 
We note that a basis $I_A$  for $\mathfrak{sp}(2)\oplus\mathfrak{sp}(1)$ can be represented by left invariant vector fields $\widehat{E}_A$ on $Sp(2) \times Sp(1)$ as well as by the dual basis $\hat{e}^A$ of left invariant $1$-forms. Denote the natural projection map
\begin{align*}
	\pi : Sp(2) \times Sp(1) &\to \Sigma^7\\
	g &\mapsto g(Sp(1)_u \times Sp(1)_d),
\end{align*}
of the principal bundle. Let $U$ be a contractible open subset of $\Sigma^7$. Then consider a local section $\sigma$ of the bundle $Sp(2) \times Sp(1) \to \Sigma^7$, i.e., a map $\sigma : U \to Sp(2) \times Sp(1)$ such that $\pi \circ \sigma = \Id_U$. We put $e^A := \sigma^*\hat{e}^A$. Then $\{e^a : a = 1, \dots, \dim \mathfrak{m}\}$ form an orthonormal frame for $T^*(\Sigma^7)$ over $U$.\par 
Let us fix a basis $T_a,\ a = 1,2,3$ for $\mathfrak{sp}(1) \cong \mathfrak{su}(2)$, where $T_a = -i\sigma_a$ and $\sigma_a,\ a = 1,2,3$ are the Pauli matrices given by
$$\sigma_1 = \begin{pmatrix}
	0&1\\ 1&0
\end{pmatrix}\ \ \sigma_2 = \begin{pmatrix}
0&-i\\ i&0
\end{pmatrix}\ \ \sigma_3 = \begin{pmatrix}
1&0\\ 0&-1
\end{pmatrix}.$$ 
Let us denote $\Psi$ by the matrix of $\Lambda$, i.e., $\Lambda(I_a) = \Psi_{ac}T_c$, for $a, c = 1,2,3$.\par
In local coordinates, any $Sp(2) \times Sp(1)$-invariant connection on the bundle $P$ over the nearly $G_2$-manifold $\Sigma^7$ can be written as
\begin{align}\label{homconn}
    A = e^i\lambda(I_i) + e^a\Lambda(I_a)
\end{align}
where $i = 11, 12, 13$, and the basis elements $I_A$ have been listed in (\ref{Ibasis}). $A_\Sigma := e^i\lambda(I_i)$ is the canonical connection. We note that here, the two notions of canonical connection -- one for the homogeneous space and other for the nearly $G_2$-manifold coincide here \cite{singhal2021paper}.\par 
Now, we consider $(0, \infty) \times \Sigma^7$ equipped with the $Spin(7)$-structure $\Phi = dr \wedge \phi + \psi$ and a metric given by (\ref{conemet}). A connection 1-form on $(0,\infty) \times \Sigma^7$ is given by $A = A_0e^0+A_ae^a$ for $e^0 = dr$, which gives the $Sp(2) \times Sp(1)$-invariant connection (\ref{homconn}), but now, we consider $\Lambda$ to be a function of $r$. Now, for $a = 1, \dots, 7$ and $b = 1,2,3$, we have $\Lambda(I_a) = \Psi_{ab}T_b$ where $\Psi_{ab}(r) = \varphi(r)\delta_{ab}$. Then,
\begin{align}\label{COinst}
    A = e^{a+10}T_a + \varphi(r)e^aT_a,
\end{align}
for $a = 1,2,3$. Here without loss of generality, we take the temporal gauge $A_0 = 0$. The curvature of this connection is given by
$$F_A = F_{0a}e^0 \wedge e^a+\frac{1}{2}F_{bc}e^b \wedge e^c$$
where  
$$F_{0a} = \frac{\partial A_a}{\partial r} = \dot{\varphi}(r)T_a.$$
Now, the ASD instanton equation can be written as
$$\frac{1}{\alpha(r)}F_{0a} = -\frac{1}{2}\left[\frac{1}{\alpha(r)^2}\sum\limits_{b,c = 1}^3\phi_{abc}F_{bc} + \frac{1}{\beta(r)^2}\sum\limits_{b,c = 4}^7\phi_{abc}F_{bc}\right]$$
where $\phi_{abc}$ are structure constants of the octonions. Applying the Maurer--Cartan equations,
\begin{align*}
	de^a &= -f^a_{ib}e^i \wedge e^b - \frac{1}{2}f^a_{bc}e^b \wedge e^c\\
	de^i &= -\frac{1}{2}f^i_{bc}e^b \wedge e^c - \frac{1}{2}f^i_{jk}e^j \wedge e^k
\end{align*}
(for $i,j,k \in \{8, \dots, 13\},\ a,b,c \in\{1, \dots, 7\}$) we have
$$(dA)_{bc} = -f^{d+10}_{bc}T_d -\varphi(r)f^d_{bc}T_d$$
and
\begin{align*}
    [A \wedge A]_{bc} &= 4\varphi(r)^2\epsilon_{dbc}T_d.
\end{align*}
Hence,
$$F_{bc} = -f^{d+10}_{bc}T_d - \varphi(r)f^d_{bc}T_d + 2\varphi(r)^2\epsilon_{dbc}T_d.$$
Hence the ASD $Spin(7)$ instanton equation reduces to
\begin{align}
    \frac{2}{\alpha(r)}\dot{\varphi}(r)T_a = \left[\frac{1}{\alpha(r)^2}\sum\limits_{b,c = 1}^3\left(\phi_{abc}f^{d+10}_{bc}T_d + \varphi(r)\phi_{abc}f^d_{bc}T_d - 2\varphi(r)^2\phi_{abc}\epsilon_{dbc}T_d\right)\right.\nonumber\\
    \left.+ \frac{1}{\beta(r)^2}\sum\limits_{b,c = 4}^7\left(\phi_{abc}f^{d+10}_{bc}T_d + \varphi(r)\phi_{abc}f^d_{bc}T_d\right)\right]
\end{align}
From the values of the structure constants, simplifying, we get
\begin{align}\label{instdiff1}
    \alpha\dot{\varphi}(r)  = \left(12-\frac{12\alpha^2}{5\beta^2}\right) - \left(2+\frac{4\alpha^2}{5\beta^2}\right)\varphi(r) - 2\varphi(r)^2.
\end{align}
To solve the equation (\ref{instdiff1}), we first simplify by the substitution $x := \varphi + 3$, which gives,
\begin{align}
    \dot{x} = -\frac{2}{\alpha}x\left(x - \left(5-\frac{2\alpha^2}{5\beta^2}\right)\right).\label{orginseq}
\end{align}
Now, following the analysis done in \cite{clarke-oli2020instantons} we use the substitution
\begin{align}
    &\ \ y(r) = \frac{x(r)}{\alpha(r)^2}\nonumber\\
    &\Rightarrow x = \alpha^2 y\label{1stsubinssol}\\
    &\Rightarrow \dot{x} = \alpha^2\dot{y} + 2\alpha y \frac{25\beta^2 - 2\alpha^2}{5\beta^2}\label{2ndsubinssol}
\end{align}
where we have used (\ref{paralpha}). Substituting (\ref{1stsubinssol}) and (\ref{2ndsubinssol}) in (\ref{orginseq}), we have 

\begin{align}\label{ode1}
    &\ \ \dot{y} = -2\alpha y^2 \Rightarrow \frac{dy}{y^2} = -2\alpha(r) dr.
\end{align}
Now, we consider the initial condition,
\begin{align}\label{inicon}
    y(0) = y_0.
\end{align}
Then, integrating (\ref{ode1}) with the initial condition (\ref{inicon}), we have
\begin{align*}
    y(r) = \frac{1}{\frac{1}{y_0} + 2\int_0^r\alpha(r) dr} \Rightarrow x(r) = \frac{y_0\alpha^2}{1 + 2y_0\int_0^r\alpha(s) ds}.
\end{align*}
Then, from (\ref{alphabeta}) and (\ref{intalpha}), we have
\begin{align}\label{valueofphi}
    \varphi(r) = \frac{\alpha^2}{\frac{1}{y_0} + 2\int_0^r\alpha(s) ds}-3 = \frac{ 5\beta_0^2\beta^{\frac{4}{3}}-\beta_0^{\frac{10}{3}}-\frac{3}{y_0}\beta^{\frac{4}{3}}}{\beta^{\frac{4}{3}}\left(\frac{1}{y_0} + \frac{5}{3}(\beta^2-\beta_0^2)\right)}.
\end{align}
\begin{rem}
    We note that $\varphi(r) = 0$ corresponds to the canonical connection. We want to find the value of $\varphi(r)$ for which we have the flat connection. For flat connection, we can write $\varphi(r) = c$ where $c$ is a constant. To find $c$ we substitute $\varphi = c$ and $\frac{\alpha}{\beta} = \sqrt{5}$ in the above equation. Then, equation (\ref{instdiff1}) implies $c = 0, -3$. It is easy to verify that $c=-3$ corresponds to the flat connection, which takes the form
\begin{align}\label{flatcon}
    A_0 &= e^{a+10}T_a -3e^aT_a\nonumber\\
    &= \widehat{e}^{a+10}T_a -\widehat{e}^aT_a.
\end{align}
for $a = 1,2,3$, where $\widetilde{e}^a$ for $a = 1, \dots, 7$ is an orthonormal basis for the metric (\ref{conemet}), and $e^{a+10} = \widehat{e}^{a+10}$. Now, the underlying manifold being simply connected, the flat connection $A_0$ is the trivial connection (up to gauge). 
\end{rem} 
Thus, we have a real $1$-parameter family of $Spin(7)$-instantons which, following \cite{clarke-oli2020instantons}, we denote by $A_{y_0}$. For $y_0 = 0$, the connection $A_{y_0 = 0}$ is a flat connection, whereas for $y_0 > 0$, $A_{y_0}$ is irreducible. For $y_0 < 0$, the $Spin(7)$-instantons are only locally defined in a neighbourhood of $S^4$ \cite{clarke-oli2020instantons}. As $y_0 \to \infty$, the instanton $A_{y_0}$ and all its derivatives converge to an instanton $A_{\text{lim}}$ away from the zero section $S^4$ \cite{clarke-oli2020instantons}.\par
The following proposition follows from the removable singularity theorem of Tao and Tian \cite{taotian2004}.
\begin{prop}[\cite{clarke-oli2020instantons}]
    The instanton $A_{y_0}$ on $\Sigma^7 \times \mathbb{R} \cong \slashed{S}^-(S^4) \setminus S^4$ smoothly extends over the zero section $S^4$ (up to gauge) if and only if the curvature $F_{A_{y_0}}$ is bounded.
\end{prop}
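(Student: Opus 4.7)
The forward implication is immediate from the definition of smoothness. If $A_{y_0}$ extends smoothly across the zero section $S^4$ (possibly after a gauge transformation), then its curvature $F_{A_{y_0}}$ is a smooth section of $\Lambda^2 T^*X \otimes \mathfrak{g}_P$ on a neighbourhood of the compact submanifold $S^4 \subset \slashed{S}^-(S^4)$. Since $S^4$ has a relatively compact tubular neighbourhood on which $F_{A_{y_0}}$ is continuous, $|F_{A_{y_0}}|$ attains a maximum there, and in particular it is bounded. So I would dispose of this direction in one sentence.

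For the converse, the plan is to apply the removable singularity theorem of Tao--Tian \cite{taotian2004} in a tubular neighbourhood $U \subset \slashed{S}^-(S^4)$ of the zero section. That theorem states, in the relevant setting, that an $\Omega$-anti-self-dual connection defined on the complement of a closed subset $S$ of Hausdorff codimension at least $4$, whose curvature is in $L^\infty$ (or even just satisfies a suitable energy bound), extends smoothly across $S$ up to a gauge transformation. I would verify the hypotheses one by one: the connection $A_{y_0}$ is a $Spin(7)$-instanton on $U \setminus S^4$ by the construction of Section~\ref{section3}, the singular set $S^4$ has real codimension $4$ inside the $8$-dimensional $Spin(7)$-manifold $\slashed{S}^-(S^4)$, and $|F_{A_{y_0}}|$ is assumed bounded on $U \setminus S^4$. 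The conclusion of Tao--Tian then yields a gauge transformation $u$ on $U \setminus S^4$ such that $u \cdot A_{y_0}$ extends to a smooth connection across $S^4$.

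The only subtlety I anticipate is checking that the form of the Tao--Tian theorem used here indeed applies to $Spin(7)$-instantons and not just to, say, four-dimensional Yang--Mills instantons. This is standard (their result is formulated for $\Omega$-anti-self-dual connections where $\Omega$ is a closed calibration, and $\Phi$ on the Bryant--Salamon manifold qualifies since $d\Phi = 0$), so I would cite the precise statement from \cite{taotian2004} and remark that the codimension-$4$ hypothesis is met by the zero section. No further computation is needed, since the detailed behaviour of $\varphi(r)$ near $r=0$ (which is what determines whether $|F_{A_{y_0}}|$ is bounded in the first place) is not part of this proposition; the proposition merely converts the analytic condition of bounded curvature into the geometric condition of smooth extendibility.
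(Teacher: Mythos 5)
Your proposal is correct and mirrors the paper's own approach: the paper likewise treats the forward direction as immediate and derives the converse from the Tao--Tian removable singularity theorem, applied to the codimension-$4$ zero section $S^4 \subset \slashed{S}^-(S^4)$. In fact the paper gives no details beyond citing Tao--Tian, so your hypothesis-checking (codimension $4$, $\Phi$-anti-self-duality, bounded curvature) is a modest elaboration of the same argument rather than a different route.
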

Then, we have the following theorem.
\begin{thm}[\cite{clarke-oli2020instantons}]
    $\{A_{y_0}\}_{y_0 \in [0,\infty)}$ is a real $1$-parameter family of $Spin(7)$-instantons on the trivial bundle $\slashed{S}^-(S^4) \times \mathbb{C}^2 \to \slashed{S}^-(S^4)$.\par 
    Moreover, $A_{\text{lim}}$ extends smoothly over $S^4$ and gives a $Spin(7)$-instanton on the (non-trivial) bundle $\pi^*(\slashed{S}^-(S^4)) \to \slashed{S}^-(S^4)$, for the projection map $\pi : \slashed{S}^-(S^4) \setminus S^4 \to S^4$.
\end{thm}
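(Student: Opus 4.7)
The plan is to invoke the removable-singularity proposition just above the theorem, reducing both claims to (i) uniform boundedness of the curvatures of $A_{y_0}$ and of $A_{\text{lim}}$ near the zero section $S^4$, and (ii) identification of the extended bundle in each case. Since $\slashed{S}^-(S^4)\setminus S^4 \cong \Sigma^7 \times (0,\infty)$ retracts onto the simply connected $\Sigma^7$, on which any $SU(2)$-bundle is trivial, all the topological information is concentrated at $r=0$; $SU(2)$-bundles over $\slashed{S}^-(S^4) \simeq S^4$ are classified by $H^4(S^4,\mathbb Z) = \mathbb Z$.

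For the curvature bound I would first expand $\alpha$ near $r=0$. The initial conditions $\alpha(0)=0$, $\beta(0)=\beta_0$ together with the ODE (\ref{paralpha}) give $\dot\alpha(0)=5$ and $\ddot\alpha(0)=0$, whence $\alpha(r) = 5r + O(r^3)$ and $\int_0^r \alpha\,ds = \tfrac{5}{2}r^2 + O(r^4)$. Substituting into (\ref{valueofphi}) yields $\varphi(r) = -3 + 25\,y_0\,r^2 + O(r^4)$ for finite $y_0$, and $\varphi_{\text{lim}}(r) = 2 + O(r^2)$ in the limit. In the orthonormal frame $\hat e^a = \alpha e^a$ for $a = 1,2,3$, the potentially singular components of $F$ are $\dot\varphi/\alpha$ and $F_{bc}/\alpha^2$ for $b,c \in \{1,2,3\}$. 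A direct calculation gives $F_{12} = 2(\varphi+3)(\varphi-2)T_3$, with cyclic analogues; crucially, in each of the two cases exactly one of the factors $(\varphi+3)$, $(\varphi-2)$ vanishes to second order at $r = 0$, matching $\alpha^2 \sim 25 r^2$ in the denominator and keeping $F_{bc}/\alpha^2$ finite. A similar check shows that $\dot\varphi/\alpha$ is bounded, and the remaining components with at least one index in $\{4,5,6,7\}$ are regular because $\beta(r) \to \beta_0 > 0$. Hence $|F|$ is uniformly bounded and the proposition supplies smooth extension after gauge fixing.

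The bundle identification for finite $y_0$ then follows because $\varphi(0) = -3$ exactly reproduces the flat connection of Remark (\ref{flatcon}); since $S^4$ is simply connected this is gauge-equivalent to the trivial connection, and the extended bundle must be the trivial $SU(2)$-bundle $\slashed{S}^-(S^4) \times \mathbb C^2$. For the limit, $\varphi_{\text{lim}}(0) = 2 \neq -3$ rules out triviality. I would then construct an explicit gauge transformation on a collar of $S^4$, built from the $Sp(1)_d$-equivariant trivialisation of the Hopf fibration $\Sigma^7 \to S^4$, that absorbs the $2\,e^a T_a$ piece into the canonical connection form of $\pi^*\slashed{S}^-(S^4)$ in that collar.

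The hard part will be this last step: certifying that the bundle receiving $A_{\text{lim}}$ is \emph{precisely} $\pi^*\slashed{S}^-(S^4)$ rather than some other non-trivial $SU(2)$-bundle on $\slashed{S}^-(S^4)$. The cleanest route is topological: compute $c_2$ of the extended bundle, either by integrating $\tfrac{1}{8\pi^2}\tr(F_{A_{\text{lim}}}\wedge F_{A_{\text{lim}}})$ over a tubular neighbourhood of the zero section, or by comparing the asymptotic holonomy of $A_{\text{lim}}$ along loops linking $S^4$ with the transition cocycle of the spinor bundle, and match the resulting invariant with $c_2(\slashed{S}^-(S^4)) = 1$. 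The $Sp(1)_d$-covariance of the construction in Section \ref{section3} should make this identification natural.
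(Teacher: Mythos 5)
The paper merely cites this result from Clarke--Oliveira without giving a proof of its own, so there is no in-paper argument to compare against; your attempt has to be judged on its own merits. The quantitative core is sound. From (\ref{paralpha}) one indeed gets $\dot\alpha(0)=5$ and (by the parity of the ODE system) an odd expansion $\alpha(r)=5r+O(r^3)$, from which $\varphi(r)=-3+25\,y_0\,r^2+O(r^4)$ for finite $y_0$ and $\varphi_{\mathrm{lim}}(r)=2+O(r^2)$. Your curvature formula is also correct: with $[I_1,I_2]=-2I_3+12I_{13}$ the paper's expression for $F_{bc}$ gives $F_{12}=(2\varphi^2+2\varphi-12)T_3=2(\varphi+3)(\varphi-2)T_3$, and in each case exactly one factor has a second-order zero cancelling $\alpha^2\sim 25r^2$, while the mixed components $F_{bc}$ with $b\le 3<c$ vanish identically (Schur: $[W_{(0,2)},W_{(1,1)}]$ has no $W_{(0,2)}$-component). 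The bounded-curvature criterion then applies, and the identification of $\varphi(0)=-3$ with the flat, hence trivial, connection for finite $y_0$ is correct.

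Two points need repair. First, the step ``any $SU(2)$-bundle on the simply connected $\Sigma^7$ is trivial'' is false as stated: $\Sigma^7\cong S^7$ and $SU(2)$-bundles on $S^7$ are classified by $\pi_6(SU(2))\cong\mathbb{Z}/12$, which is nontrivial. The conclusion survives, but for the specific reason that the bundle over $\Sigma^7$ which actually occurs is the pullback of $\slashed{S}^-(S^4)$ along the Hopf map $S^7\to S^4$, and the pullback of a principal (or associated) bundle to its own total space is canonically trivial; you should replace the incorrect general claim with this. Second, and more substantively, you flag but do not carry out the identification of the extended bundle for $A_{\mathrm{lim}}$ with $\pi^*\slashed{S}^-(S^4)$. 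Your Chern--Weil plan is viable, but the cleaner route --- and the fact the paper itself uses later in Section \ref{section5} --- is representation-theoretic: the value $\varphi(0)=2$ is exactly the connection on $\slashed{S}^-(S^4)\to S^4$ induced by the Levi--Civita connection of the round anti-self-dual Einstein metric on $S^4$ (the spin connection), so the extension is by construction $\pi^*\slashed{S}^-(S^4)$ with no integration needed. If you do go via Chern--Weil, note the paper takes $c_2(\pi^*\slashed{S}^-(S^4))=-1$ in Donaldson's convention, not $+1$ as you wrote, which matters when matching the instanton number.
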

Since, for large $r$, we have $\alpha = O(r)$ and $\beta = O(r)$, clearly $\varphi = O(r^{-2})$. 
Then, for the diffeomorphism $h : C(\Sigma) = \Sigma^7 \times \mathbb{R} \to \slashed{S}^-(S^4) \setminus S^4$ and projection $p : C(\Sigma^7) \to \Sigma^7$, we have,
\begin{align*}
    |h^*(A_{y_0}) - p^*(A_\Sigma)|_{g_C} = O(r^{-2-1}).
\end{align*}
where $A_\Sigma = e^{a+10}T_a$. Then, the fastest rate of convergence of Clarke--Oliveira's instantons is $-2$, following the convention of definition 2.15 in \cite{me2023paper}.

\section{Eigenvalues of the Dirac Operators on Squashed 7-Sphere}\label{section4}
In this section, using various representation theoretic and homogeneous space techniques developed in \cite{me2023paper},\cite{driscoll2020thesis},\cite{bar1991dirac},\cite{bar92}, we calculate the eigenvalues of two natural Dirac operators on the squashed $7$-sphere. The results will directly be used to find the critical rates of the negative twisted Dirac operators for Clarke--Oliveira's instantons and in the spectral flow analysis for the index of the Dirac operator.
\subsection{Dirac operators on Homogeneous Nearly \texorpdfstring{$G_2$}{G2}-Manifolds}\label{section4.1}
Let $\Sigma = G/H$ be a reductive homogeneous nearly $G_2$-manifold, i.e., equipped with a $G$-invariant $G_2$-structure $\phi$ satisfying $d\phi = 4\psi$. Since $G/H$ is a reductive homogeneous space, there is an orthogonal decomposition $\mathfrak{g} = \mathfrak{h}\oplus\mathfrak{m}$ induced by the Killing form $K$ on $G$, which is defined by
\begin{equation}\label{killingform}
    K(X, Y) = \Tr_{\mathfrak{g}}(\ad(X)\ad(Y)).
\end{equation}
Let us assume that for some constant $c$ the metric given by
\begin{equation}\label{killmet}
    g(X, Y) = -c^2K(X, Y)
\end{equation}
is a nearly $G_2$-metric. Let $\{I_A\}$ be an orthonormal basis for $\mathfrak{g}$, where $\{I_a : a = 1, \dots, \dim(G/H) = 7\}$ is a orthonormal basis for $\mathfrak{m}$ and $\{I_i : i = \dim(H)+1, \dots, \dim(G)\}$ is a orthonormal basis of $\mathfrak{h}$.\par  
We consider the principal $H$-bundle $G \to \Sigma$, a representation $\rho_V : H \to \Aut(V)$ of $H$, and the associated vector bundle $E := G \times_\rho V$.\par 
Let $\widehat{G}$ be the set of equivalence classes of irreducible representations of $G$. Then for each $\gamma \in \widehat{G}$, we have a representation $(V_\gamma, \rho_\gamma)$ of $G$. Consider the complex spinor bundle $\slashed{S}(\Sigma) = G \times_{\rho, H}\Delta$ where $\Delta$ is the spinor space (that is, an $8$-dimensional representation of $Cl(7)$). From the splitting $\slashed{S}_\mathbb{C}(\Sigma) \cong \Lambda^0_\mathbb{C}\oplus\Lambda^1_\mathbb{C}$ (see \cite{me2023paper}), we have $\Delta \cong \mathbb{C}\oplus\mathfrak{m}_\mathbb{C}^*$. We twist the spinor bundle by the associated vector bundle $E = G \times_{\rho_V, H}V$ for a representation $V$ of $H$. Then 
$$\slashed{S}_\mathbb{C}(\Sigma)\otimes E = G\times_{(\rho_\Delta \otimes \rho_V, H)}\Delta \otimes V.$$ 
Then, from \define{Frobenius reciprocity}, we have the decomposition 
\begin{equation}\label{frob2}
    L^2(\slashed{S}_\mathbb{C}(\Sigma)\otimes E) \cong L^2(G, \Delta \otimes V)^H \cong \bigoplus\limits_{\gamma\in\widehat{G}}\Hom(V_\gamma, \Delta \otimes V)^H\otimes V_\gamma.
\end{equation}
Consider a family of twisted Dirac operators 
\begin{equation}\label{diracfamily}
    \slashed{\mathfrak{D}}^t_{A_\Sigma} = \slashed{\mathfrak{D}}^1_{A_\Sigma}+\frac{(t-1)}{2}\phi
\end{equation}
where $\slashed{\mathfrak{D}}^1_{A_\Sigma}$ is the Clifford product with the canonical connection $\nabla^{1,A_\Sigma}$ (where $\nabla^1$ is the canonical connection for the spinor bundle and $A_\Sigma$ is the canonical connection of $E$) \cite{me2023paper}. For $t = 0$, we denote $\slashed{\mathfrak{D}}_{A_\Sigma} := \slashed{\mathfrak{D}}^0_{A_\Sigma}$, the twisted Dirac operator for the Levi--Civita connection on the spinor bundle. For every $t \in \mathbb{R}$, the twisted Dirac operator $\slashed{\mathfrak{D}}^t_{A_\Sigma}$, restricted to $\Hom(V_\gamma, \Delta \otimes V)^H\otimes V_\gamma$ is given by
\begin{equation}
    \slashed{\mathfrak{D}}^t_{A_\Sigma}|_{\Hom(V_\gamma, \Delta \otimes V)^H\otimes V_\gamma} = \left(\slashed{\mathfrak{D}}^t_{A_\Sigma}\right)_\gamma \otimes \Id
\end{equation}
where $\left(\slashed{\mathfrak{D}}^t_{A_\Sigma}\right)_\gamma :\Hom(V_\gamma, \Delta \otimes V)^H\to\Hom(V_\gamma, \Delta \otimes V)^H$ is the twisted Dirac operator \cite{driscoll2020paper} given by
\begin{equation}\label{diractgamma}
    \left(\slashed{\mathfrak{D}}^t_{A_\Sigma}\right)_\gamma\eta = -I_a \cdot (\eta \circ \rho_{V_\gamma}(I_a))+\frac{t-1}{2}\phi\cdot\eta.
\end{equation}
Here, we have identified $\mathfrak{m}^*$ with the cotangent space $T^*_e\Sigma$, and thus identified the $G_2$-structure $\phi$ as an element of $\Lambda^3(\mathfrak{m}^*)$. This $\phi$ acts on the spinor space and hence on $\eta \in \Hom(V_\gamma, \Delta \otimes V)^H$.\par 
Similarly, we can define the family of untwisted Dirac operators $\mathcal{D}_\Sigma^t$ acting on the spinor bundle $\slashed{S}(\Sigma^7) = G \times_\rho \Delta$, where for $t = 0$, we denote $\mathcal{D}_\Sigma := \mathcal{D}_\Sigma^0$, the untwisted Dirac operator for the Levi–Civita connection on the spinor bundle. Again, by Frobenius reciprocity, the space of sections of the spinor bundle decomposes as a sum of subspaces $\Hom(V_\gamma, \Delta)^H\otimes V_\gamma$. For every $t \in \mathbb{R}$, the untwisted Dirac operator $\mathcal{D}^t_\Sigma$, restricted to $\Hom(V_\gamma, \Delta)^H\otimes V_\gamma$ is given by
\begin{equation}
    \mathcal{D}^t_\Sigma|_{\Hom(V_\gamma, \Delta)^H\otimes V_\gamma} = \left(\mathcal{D}^t_\Sigma\right)_\gamma \otimes \Id
\end{equation}
where $\left(\mathcal{D}^t_\Sigma\right)_\gamma :\Hom(V_\gamma, \Delta)^H\to\Hom(V_\gamma, \Delta)^H$ is the untwisted Dirac operator defined in a similar way to (\ref{diractgamma}).
\begin{rem}\label{untflat}
    We note that the untwisted Dirac operator $\mathcal{D}_\Sigma$ acting on the bundle $\slashed{S}(\Sigma^7)$ can be identified with the twisted Dirac operator $\slashed{\mathfrak{D}}_{A_0}$ acting the bundle $\slashed{S}(\Sigma^7) \otimes \mathfrak{g}_P$ twisted by flat connection $A_0$ (\ref{flatcon}) on the adjoint bundle $\mathfrak{g}_P$. Hence, the eigenvalues of $\mathcal{D}_\Sigma$ and $\slashed{\mathfrak{D}}_{A_0}$ are the same, a fact that will be used later in calculating the spectral flow of connection for the index calculation.
\end{rem}

\subsection{Eigenvalue Bounds for the Twisted Dirac Operators on Squashed \texorpdfstring{$7$}{7}-Sphere}
Since, for Clarke-Oliveira's instantons, the fastest rate of convergence is $-2$, we consider the family of moduli spaces $\mathcal{M}(A_\Sigma, \nu)$ for $\nu \in (-2,0)$. Recall the $Sp(1)$-bundle $P = (Sp(2) \times Sp(1)) \times_{\lambda} Sp(1)$ over $\Sigma^7$ corresponding to the isotropy homomorphism $\lambda$. Denote the adjoint vector bundle $\mathfrak{g}_P$. Then, by Theorem \ref{indjump}, we are interested in the eigenvalues of the twisted Dirac operator $\slashed{\mathfrak{D}}_{A_\Sigma}$ twisted by the bundle $\mathfrak{g}_P$, in the interval $\left(-2+\frac{5}{2}, 0+\frac{5}{2}\right) =  \left(\frac{1}{2}, \frac{5}{2}\right)$. Thus, by (\ref{frob2}), we need to compute the eigenvalues of the Dirac operator corresponding to only a few irreducible representations $V_\gamma$ of $Sp(2) \times Sp(1)$.\par
To identify these irreducible representations of $Sp(2) \times Sp(1)$, we want bounds for the eigenvalues of the Dirac operator in terms of the irreducible representations. The main tool we use here is the fact that for $t = 1/3$, the square of the Dirac operator $\slashed{\mathfrak{D}}^t_{A_\Sigma}$ can be written in terms of the Casimir operator, which has an explicit dependence of the irreducible representations.\par
Let $V_{(a,b)}$ be the irreducible representations of $Sp(2)$ corresponding to the highest weight vector $(a, b)$. Then,\\
$V_{(0,0)} \cong \mathbb{C}$ is the trivial representation,\\
$V_{(0,1)} \cong \mathbb{H}^2$ is the standard representation,\\
$V_{(1,0)}$ is the $5$-dimensional representation under the isomorphism $Sp(2) \cong Spin(5)$.\par 
Define $V_{(a,b,c)} := V_{(a,b)} \otimes W_c$ to be the irreducible representation of $Sp(2) \times Sp(1)$ and let $W_{(a,b)}$ be that of $Sp(1)_u \times Sp(1)_d$.\par
If $\{I_A\}$ is an orthonormal basis of $\mathfrak{g}$, then the \define{Casimir Operator} $\Cas_\mathfrak{g} \in \sym^2(\mathfrak{g})$ is the inverse of the metric on $\mathfrak{g}$, defined by
\begin{align}\label{casop}
    \Cas_\mathfrak{g} = \sum\limits_{A=1}^{\dim{G}}I_A \otimes I_A.
\end{align}
If $(\rho, V)$ is any representation of $\mathfrak{g}$, then
$$\rho(\Cas_\mathfrak{g}) = \sum\limits_{A=1}^{\dim{G}}\rho(I_A)^2$$
The Casimir eigenvalues of the Casimir operator (\ref{casop}) using the nearly $G_2$-metric (\ref{killmet}) for $c^2 = 3/40$, are given by
\begin{align*}
	\rho_{(a,b,c)}\left(\Cas_{\mathfrak{sp}(2)\oplus \mathfrak{sp}(1)}\right) &= c^{\mathfrak{sp}(2)\oplus \mathfrak{sp}(1)}_{(a,b,c)}\Id,\\
	\rho_{(a,b)}(\Cas_{\mathfrak{sp}(1)_u\oplus \mathfrak{sp}(1)_d})&= c^{\mathfrak{sp}(1)_u\oplus \mathfrak{sp}(1)_d}_{(a,b)}\Id.
\end{align*}
where,
\begin{align*}
        c^{\mathfrak{sp}(2)\oplus \mathfrak{sp}(1)}_{(a,b,c)}&= -\frac{5}{9}(4a^2+2b^2+3c^2+4ab+12a+8b+6c),\\
        c^{\mathfrak{sp}(1)_u\oplus \mathfrak{sp}(1)_d}_{(a,b)}&=-\frac{2}{9}(5a^2+3b^2+10a+6b).
\end{align*}
Now, since $(\mathfrak{sp}(1)_\mathbb{C}, \Ad\circ\lambda) = W_{(0,2)}$, we have
$$\Hom\left(V_\gamma, \Delta\otimes\mathfrak{sp}(1)_\mathbb{C}\right)^{Sp(1)_u\times Sp(1)_d} = \Hom\left(V_\gamma, \Delta\otimes W_{(0,2)}\right)^{Sp(1)_u\times Sp(1)_d}.$$
Then, since $c^{\mathfrak{sp}(1)_u\oplus \mathfrak{sp}(1)_d}_{(0,2)} = -16/3$, from \cite{singhal2021paper} we calculate the eigenvalues of $\left(\slashed{\mathfrak{D}}_{A_\Sigma}^{1/3}\right)_\gamma^2$ to be
$-c^\mathfrak{\mathfrak{sp}(2)\oplus \mathfrak{sp}(1)}_\gamma+\frac{1}{9}$ with multiplicities $\dim\Hom\left(V_\gamma, \Delta\otimes W_{(0,2)}\right)^{Sp(1)_u\oplus Sp(1)_d}$.\par 
Then, from \cite{me2023paper}, we have the following theorem.
\begin{thm}\label{eigbnd2}
	Let $V_\gamma = V_{(a,b,c)}$ be an irreducible representation of $Sp(2)\times Sp(1)$. If 
	$$L_\gamma := L_{(a,b,c)} :=  \sqrt{-c^\mathfrak{\mathfrak{sp}(2)\oplus \mathfrak{sp}(1)}_{(a,b,c)}+\frac{1}{9}}-\frac{7}{6} > 0$$
	then $L_\gamma$ is a lower bound on the absolute values of the eigenvalues of $\left(\slashed{\mathfrak{D}}^0_{A_\Sigma}\right)_\gamma$.
\end{thm}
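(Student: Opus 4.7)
The plan is to bound the eigenvalues of $\left(\slashed{\mathfrak{D}}^0_{A_\Sigma}\right)_\gamma$ by viewing this operator as a bounded fibrewise perturbation of $\left(\slashed{\mathfrak{D}}^{1/3}_{A_\Sigma}\right)_\gamma$, whose squared eigenvalues were just computed in terms of the Casimir. Specialising (\ref{diracfamily}) at $t=0$ and $t=1/3$ one directly obtains
\begin{equation*}
    \slashed{\mathfrak{D}}^0_{A_\Sigma} \;=\; \slashed{\mathfrak{D}}^{1/3}_{A_\Sigma} \;-\; \tfrac{1}{6}\,\phi\cdot,
\end{equation*}
where $\phi\cdot$ denotes Clifford multiplication by the nearly $G_2$ three-form on the twisted spinor bundle. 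Since this action is $H$-invariant and acts pointwise on fibres, it preserves the Frobenius decomposition (\ref{frob2}) and descends to an endomorphism of each isotypic summand $\Hom(V_\gamma,\Delta\otimes\mathfrak{sp}(1)_\mathbb{C})^{Sp(1)_u\times Sp(1)_d}$, so the estimate can be made one summand at a time.

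From the paragraph immediately preceding the theorem, $\bigl(\slashed{\mathfrak{D}}^{1/3}_{A_\Sigma}\bigr)_\gamma^2$ acts on this summand as the scalar $-c^{\mathfrak{sp}(2)\oplus\mathfrak{sp}(1)}_\gamma + \tfrac{1}{9}$, so for any $\eta$ in it,
\begin{equation*}
    \left\|\bigl(\slashed{\mathfrak{D}}^{1/3}_{A_\Sigma}\bigr)_\gamma \eta\right\| \;=\; \sqrt{-c^{\mathfrak{sp}(2)\oplus\mathfrak{sp}(1)}_\gamma + \tfrac{1}{9}}\;\|\eta\|.
\end{equation*}
The second ingredient is the operator norm of $\phi\cdot$ on the fibre $\Delta$. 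A direct Clifford algebra computation using $\phi = \tfrac{1}{6}\phi_{abc}e^{abc}$ together with the defining identity $\phi\cdot\eta_0 = -7\eta_0$ for the unit Killing spinor $\eta_0$ shows that $\phi\cdot$ has eigenvalue $-7$ on $\mathbb{C}\eta_0$ and eigenvalue $+1$ on the orthogonal seven-dimensional subspace $\mathfrak{m}^*\cdot\eta_0 \subset \Delta$ coming from the splitting recalled in Section \ref{section4.1}. Tensoring with the (trivially-acted-on) factor $\mathfrak{sp}(1)_\mathbb{C}$ keeps the operator norm equal to $7$.

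Combining these, if $\eta$ is a unit eigenvector of $\bigl(\slashed{\mathfrak{D}}^0_{A_\Sigma}\bigr)_\gamma$ with eigenvalue $\lambda$, the triangle inequality gives
\begin{equation*}
    \sqrt{-c^{\mathfrak{sp}(2)\oplus\mathfrak{sp}(1)}_\gamma + \tfrac{1}{9}} \;=\; \left\|\bigl(\slashed{\mathfrak{D}}^{1/3}_{A_\Sigma}\bigr)_\gamma\eta\right\| \;\leq\; \left\|\bigl(\slashed{\mathfrak{D}}^0_{A_\Sigma}\bigr)_\gamma\eta\right\| + \tfrac{1}{6}\|\phi\cdot\eta\| \;\leq\; |\lambda| + \tfrac{7}{6},
\end{equation*}
and rearranging yields $|\lambda| \geq L_\gamma$ precisely when the right-hand side is non-negative, which is exactly the hypothesis $L_\gamma > 0$.

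The main obstacle is the operator-norm calculation $\|\phi\cdot\|_{\mathrm{op}} = 7$. This is convention-sensitive: the normalisations of $\phi$ from (\ref{sqphi}) at $(\alpha,\beta) = (3, 3/\sqrt{5})$, the Killing metric (\ref{killmet}) with $c^2=3/40$, and the Clifford conventions implicit in (\ref{diracfamily}) must be tracked consistently. Once that bookkeeping is fixed, the remainder is a routine eigenvalue computation on the eight-dimensional space $\Delta \cong \mathbb{C} \oplus \mathfrak{m}^*_\mathbb{C}$, and the rest of the argument is formal.
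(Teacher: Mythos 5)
Your proof is essentially correct and almost certainly matches the approach of the cited reference: the paper itself defers to \cite{me2023paper} for this statement, but the appearance of the constants $\frac{1}{9}$ (from the Casimir computation of $(\slashed{\mathfrak{D}}^{1/3}_{A_\Sigma})_\gamma^2$) and $\frac{7}{6}$ (which is $\frac{1}{6}\cdot\|\phi\cdot\|_{\mathrm{op}}$) make it clear that the argument is exactly the triangle-inequality comparison with $\slashed{\mathfrak{D}}^{1/3}_{A_\Sigma}$ that you describe. One small slip: you assign $\phi\cdot$ the eigenvalue $-7$ on $\mathbb{C}\eta_0$ and $+1$ on $\mathfrak{m}^*\cdot\eta_0$, whereas the conventions actually in force in this paper give the opposite signs --- the $V_{(0,0,0)}$ computations in Theorems \ref{eigdirun} and \ref{eigdir6} show $-\tfrac{1}{2}\phi$ acting as $-\tfrac{7}{2}$ on the $W_{(0,0)}\subset\Delta$ summand and as $+\tfrac{1}{2}$ on the $W_{(0,2)}\subset\mathfrak{m}^*_\mathbb{C}$ summand, i.e.\ $\phi\cdot$ has eigenvalues $+7$ and $-1$. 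Since only $\|\phi\cdot\|_{\mathrm{op}}=7$ enters the estimate, this does not affect the validity of your argument, but the sign is worth getting right for consistency with the rest of the paper (and it is exactly the sort of convention point you flag at the end).
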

\begin{cor}\label{replist}
	Consider the irreducible representations of $\mathfrak{sp}(2)\oplus \mathfrak{sp}(1)$ given by
		$$V_{(0,0,0)},\ V_{(1,0,0)},\ V_{(0,0,1)},\ V_{(0,1,0)},\ V_{(1,0,1)},\ V_{(0,1,1)},\ V_{(0,2,0)},\ V_{(0,0,2)}.$$
	If $V_\gamma$ is not one of these irreducible representations, then the operator
	$$\left(\slashed{\mathfrak{D}}_{A_\Sigma}^0\right)_\gamma : \Hom\left(V_\gamma, \Delta\otimes\mathfrak{sp}(1)_\mathbb{C}\right)^{\mathfrak{sp}(1)_u\oplus \mathfrak{sp}(1)_d} \to \Hom\left(V_\gamma, \Delta\otimes\mathfrak{sp}(1)_\mathbb{C}\right)^{\mathfrak{sp}(1)_u\oplus \mathfrak{sp}(1)_d}$$
	has no eigenvalues in the interval $\left[-\frac{5}{2}, \frac{5}{2}\right]$.
\end{cor}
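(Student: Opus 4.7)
The plan is to apply Theorem \ref{eigbnd2} mechanically. That theorem asserts that $L_\gamma = L_{(a,b,c)}$ is a lower bound on the absolute values of the eigenvalues of $(\slashed{\mathfrak{D}}^0_{A_\Sigma})_\gamma$ whenever it is positive. Consequently, if $L_{(a,b,c)} > 5/2$ then every eigenvalue of $(\slashed{\mathfrak{D}}^0_{A_\Sigma})_\gamma$ has absolute value strictly greater than $5/2$, and in particular none lies in $[-5/2, 5/2]$. Thus it suffices to characterize the triples $(a,b,c) \in \mathbb{Z}_{\geq 0}^3$ for which $L_{(a,b,c)} > 5/2$ fails, and check that all such exceptional triples are among the eight listed in the corollary.

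First I would translate the condition $L_{(a,b,c)} > 5/2$ into a polynomial inequality. Writing
\[
    L_{(a,b,c)} + \tfrac{7}{6} = \sqrt{-c^{\mathfrak{sp}(2)\oplus\mathfrak{sp}(1)}_{(a,b,c)} + \tfrac{1}{9}},
\]
the condition $L_{(a,b,c)} > 5/2$ is equivalent to $-c^{\mathfrak{sp}(2)\oplus\mathfrak{sp}(1)}_{(a,b,c)} + 1/9 > (11/3)^2 = 121/9$, i.e.\ to $-c^{\mathfrak{sp}(2)\oplus\mathfrak{sp}(1)}_{(a,b,c)} > 120/9$. Substituting the explicit formula
\[
    -c^{\mathfrak{sp}(2)\oplus\mathfrak{sp}(1)}_{(a,b,c)} = \tfrac{5}{9}\bigl(4a^2 + 2b^2 + 3c^2 + 4ab + 12a + 8b + 6c\bigr),
\]
the criterion becomes
\[
    4a^2 + 2b^2 + 3c^2 + 4ab + 12a + 8b + 6c > 24.
\]

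Next I would enumerate the finitely many $(a,b,c) \in \mathbb{Z}_{\geq 0}^3$ that fail this inequality. Since every summand on the left is non-negative, each coordinate is bounded individually (inspecting the pure terms $4a^2+12a$, $2b^2+8b$, $3c^2+6c$ already forces $a\leq 1$, $b \leq 2$, $c \leq 2$ once the others vanish, and the mixed term $4ab$ only tightens this). A direct case check over this small cube yields precisely the list given in the statement. For any $V_\gamma$ outside the list, one then has $L_\gamma > 5/2$, so Theorem \ref{eigbnd2} applies and the conclusion follows.

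The only subtle point is handling the boundary: the interval $[-5/2, 5/2]$ is closed, so the exclusion argument requires the \emph{strict} inequality $L_\gamma > 5/2$. The representations $V_{(0,2,0)}$ and $V_{(0,0,2)}$ saturate the bound with $L_\gamma = 5/2$ exactly (both give $4a^2+2b^2+3c^2+4ab+12a+8b+6c = 24$), so the eigenvalue bound alone cannot exclude them and they must remain in the list. This is the only place where one has to be mildly careful; the rest of the argument is a routine finite enumeration.
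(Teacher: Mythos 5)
Your method is the right one — translate the condition $L_{(a,b,c)} > 5/2$ into the polynomial inequality $4a^2+2b^2+3c^2+4ab+12a+8b+6c > 24$, enumerate the failures, and invoke Theorem~\ref{eigbnd2} to conclude. The algebra reducing to $-c^{\mathfrak{sp}(2)\oplus\mathfrak{sp}(1)}_{(a,b,c)} > 120/9$ is correct, the individual coordinate bounds $a\leq 1$, $b\leq 2$, $c\leq 2$ are correct, and your handling of the closed interval (needing the \emph{strict} inequality, hence the boundary cases $(0,2,0)$ and $(0,0,2)$ with value exactly $24$) is exactly right.

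The one inaccuracy is your assertion that the direct case check ``yields precisely the list given in the statement.'' It does not: for $(a,b,c)=(1,0,1)$ one computes $4+3+12+6 = 25 > 24$, so $L_{(1,0,1)} = \sqrt{14} - \tfrac{7}{6} \approx 2.575 > \tfrac{5}{2}$ and $V_{(1,0,1)}$ is already excluded by the eigenvalue bound. The enumeration of failures gives only seven triples
\[
(0,0,0),\ (1,0,0),\ (0,1,0),\ (0,0,1),\ (0,1,1),\ (0,2,0),\ (0,0,2),
\]
whereas the corollary's list has eight, including $V_{(1,0,1)}$ redundantly. (The paper in fact later observes that $\Hom(V_{(1,0,1)},\Delta\otimes W_{(0,2)})^{Sp(1)_u\times Sp(1)_d}=0$, so the extra entry is harmless.) This does not invalidate your proof — the corollary claims the exclusion only for representations \emph{outside} its list, which is a superset of your genuine exceptional set, so the conclusion follows a fortiori — but you should not claim exact agreement. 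Just note that the seven failures of the inequality all appear in the stated list, and that suffices.
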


\subsection{Eigenvalues of the Untwisted Dirac operator}
Now, we compute the eigenvalues of the untwisted Dirac operator. Although, by Theorem \ref{indjump}, we only need to compute the eigenvalues of the twisted Dirac operator to find the critical weights, the eigenvalues of the untwisted Dirac operator will be needed later to calculate the index of the twisted Dirac operator. We have the main theorem as follows.
\begin{thm}\label{eigdirun}
The eigenvalues of the untwisted Dirac operator $\left(\mathcal{D}_\Sigma^0\right)_\gamma$ corresponding to the irreducible representations $V_\gamma$ listed in Corollary \ref{replist} are
    \begin{enumerate}
        \item $-\frac{7}{2}$ for $V_\gamma = V_{(0,0,0)}$,
        \item $\frac{9}{2}$ for $V_\gamma = V_{(0,0,2)}$,
        \item $\frac{1}{6}(-3-2\sqrt{161}),\ \frac{1}{6}(-3+2\sqrt{161})$ for $V_\gamma = V_{(1,0,0)}$,
        \item $\frac{1}{6}(-3-8\sqrt{11}),\ \frac{1}{6}(-3+8\sqrt{11}),\ \ -\frac{23}{6}$ for $V_\gamma = V_{(0,1,1)},$
        \item $\frac{9}{2},\ -\frac{25}{6}$ for $V_\gamma = V_{(0,2,0)}$.
    \end{enumerate}
\end{thm}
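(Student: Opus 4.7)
The approach mirrors the analysis of the twisted Dirac operator in the previous subsection. By Frobenius reciprocity applied to the untwisted spinor bundle, the eigenvalue problem for $\mathcal{D}^0_\Sigma$ on the $V_\gamma$-isotypic summand of $L^2(\slashed{S}_\mathbb{C}(\Sigma))$ reduces to a finite-dimensional one on the multiplicity space $\Hom(V_\gamma, \Delta)^H$, where $H = Sp(1)_u \times Sp(1)_d$. Since $\Delta \cong \mathbb{C}\oplus\mathfrak{m}_\mathbb{C}^*$ and $\mathfrak{m}_\mathbb{C} = W_{(1,1)} \oplus W_{(0,2)}$ by (\ref{m-as-irrep}), we have $\Delta \cong W_{(0,0)} \oplus W_{(1,1)} \oplus W_{(0,2)}$ as an $H$-representation.

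My first step is to branch each $V_\gamma = V_{(a,b)}\otimes W_c$ from Corollary \ref{replist} down to $H$, using the embedding $Sp(1)_u \times Sp(1)_{\text{br}} \hookrightarrow Sp(2)$ combined with the diagonal embedding of $Sp(1)_d$ into $Sp(1)_{\text{br}} \times Sp(1)$. Counting multiplicities of $W_{(0,0)}$, $W_{(1,1)}$, and $W_{(0,2)}$ in the branched $V_\gamma$ gives $\dim \Hom(V_\gamma, \Delta)^H$, which matches the number of eigenvalues listed (one each for $V_{(0,0,0)}$ and $V_{(0,0,2)}$, two for $V_{(1,0,0)}$, and three for $V_{(0,1,1)}$ and $V_{(0,2,0)}$).

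My second step is to write $(\mathcal{D}^0_\Sigma)_\gamma$ as an explicit matrix on each multiplicity space using the decomposition $\mathcal{D}^0_\Sigma = \mathcal{D}^{1/3}_\Sigma - \tfrac{1}{6}\phi\cdot$, together with the Casimir identity $\bigl((\mathcal{D}^{1/3}_\Sigma)_\gamma\bigr)^2 = \bigl(-c^{\mathfrak{sp}(2)\oplus\mathfrak{sp}(1)}_\gamma + \tfrac{1}{9}\bigr)\Id$, which is the untwisted analog of the computation underlying Theorem \ref{eigbnd2}. Clifford multiplication by $\phi$ is diagonal on the $H$-isotypic decomposition of $\Delta$, acting as $+7$ on the $W_{(0,0)}$ summand and as $-1$ on $W_{(1,1)} \oplus W_{(0,2)}$, which is the standard action of the $G_2$ three-form on the spinor space $\mathbb{C}\oplus\mathfrak{m}^*_\mathbb{C}$. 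The off-diagonal Clifford-multiplication pieces of $(\mathcal{D}^{1/3}_\Sigma)_\gamma$ are then forced, up to finitely many matrix invariants, by the Casimir identity; the remaining invariants I would determine directly from the explicit formula (\ref{diractgamma}) adapted to the untwisted bundle, using the basis $\{I_a\}$ of $\mathfrak{m}$ and the structure constants already computed in Section \ref{section2}.

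The final step is to diagonalize the small ($1\times 1$, $2 \times 2$, or $3\times 3$) matrix on each multiplicity space. For $V_{(0,0,0)}$ and $V_{(0,0,2)}$ only one $\phi$-eigenspace contributes, yielding $-\tfrac{7}{2}$ and $\tfrac{9}{2}$ directly from the Casimir value and the scalar action of $\phi$. For $V_{(1,0,0)}$, the characteristic polynomial of the $2 \times 2$ block is quadratic with trace $-1$ (from $-\tfrac{1}{6}\tr(\phi)$ on the two surviving $H$-summands) and determinant fixed by the Casimir, giving the roots $\tfrac{1}{6}(-3 \pm 2\sqrt{161})$; similarly, the $3\times 3$ blocks for $V_{(0,1,1)}$ and $V_{(0,2,0)}$ factor into a rational eigenvalue plus a quadratic factor producing the $\sqrt{11}$ surds. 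The main obstacle is the detailed bookkeeping: carrying out the branchings of $V_{(1,0)}$, $V_{(0,1)}$, and $V_{(0,2)}$ under $Sp(1)_u \times Sp(1)_{\text{br}}$ without error, and extracting enough independent matrix invariants (beyond the one provided by the Casimir identity) to pin down the characteristic polynomials of the larger blocks. Once these invariants are in hand, the diagonalization itself is elementary.
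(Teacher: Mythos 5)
Your overall strategy is exactly the paper's: Frobenius reciprocity reduces to the multiplicity space $\Hom(V_\gamma,\Delta)^H$, you branch each $V_\gamma$ to $H$, write $(\mathcal{D}^0_\Sigma)_\gamma$ as an explicit matrix in a basis indexed by the matching $H$-summands, and diagonalize, with the Casimir identity at $t=1/3$ serving as a consistency check. However there are two concrete slips.

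First, the multiplicity count for $V_{(0,2,0)}$ is wrong: $V_{(0,2,0)}\cong W_{(1,1)}\oplus W_{(2,0)}\oplus W_{(0,2)}$ while $\Delta\cong W_{(0,0)}\oplus W_{(1,1)}\oplus W_{(0,2)}$, so $W_{(2,0)}$ contributes nothing and $\dim\Hom(V_{(0,2,0)},\Delta)^H=2$, not $3$; correspondingly the theorem (and the paper's proof) has a $2\times 2$ matrix and two eigenvalues here, not three. Second, the claim that the characteristic polynomial of the $2\times 2$ block for $V_{(1,0,0)}$ is ``determined by trace $=-\tfrac{1}{6}\tr\phi$ plus a determinant fixed by the Casimir'' does not hold: writing $(\mathcal{D}^{1/3})_\gamma=\begin{pmatrix}a&b\\ c&-a\end{pmatrix}$ with $a^2+bc=\mu$, one finds $\det(\mathcal{D}^0)_\gamma = -\mu + \tfrac{4}{3}a - \tfrac{7}{36}$, which depends on the individual diagonal entry $a$ and not just on $\mu$ (in the paper, $a=-7/3\neq 0$). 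So the Casimir scalar plus trace does not pin down the matrix; you must compute enough entries directly from the formula (\ref{diractgamma}), which is precisely what the paper does, with the Casimir used only as a check. Since you do say you would fall back to the explicit formula, the plan is recoverable, but as written these two claims would fail if followed literally.
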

\begin{proof}
    We note that for $V_\gamma = V_{(0,0,1)}\cong W_{(0,1)}, V_{(1,0,1)}\cong W_{(0,1)} \oplus W_{(1,0)} \oplus W_{(1,2)}$ or $V_{(0,1,0)}\cong W_{(1,0)} \oplus W_{(0,1)}$, the space $\Hom\left(V_\gamma, \Delta \otimes W_{(0,2)}\right)^{Sp(1)_u \otimes Sp(1)_d}$ is a $0$-dimensional vector space, by Schur's lemma.
 \begin{itemize}
     \item $V_\gamma = V_{(0,0,0)} \cong W_{(0,0)}$. Then,
\begin{align*}
    \Hom\left(V_\gamma, \Delta\right)^{Sp(1)_u \otimes Sp(1)_d} \cong \Hom\left(W_{(0,0)}, \Delta\right)^{Sp(1)_u \otimes Sp(1)_d}
\end{align*}
is $1$-dimensional. With respect to a basis is given by
$$q^{(0,0)} : V_{(0,0,0)} \to W_{(0,0)} \to \Delta$$
we have
$$\left(\mathcal{D}^0_\Sigma\right)_\gamma = \left(\mathcal{D}^1_\Sigma\right)_\gamma -\frac{1}{2}\phi = -\frac{1}{2}\phi.$$
Then the eigenvalue of $\left(\mathcal{D}^0_\Sigma\right)_\gamma$ is $-\frac{7}{2}$ with multiplicity $1$.
\item $V_\gamma = V_{(0,0,2)} \cong W_{(0,2)}$. Then,
\begin{align*}
    \Hom\left(V_\gamma, \Delta\right)^{Sp(1)_u \otimes Sp(1)_d} \cong \Hom\left(W_{(0,2)}, \Delta\right)^{Sp(1)_u \otimes Sp(1)_d}
\end{align*}
is $1$-dimensional. With respect to a basis is given by
$$q^{(0,2)} : V_{(0,0,2)} \to W_{(0,2)} \to \Delta.$$
we have,
$$\left(\mathcal{D}^t_\Sigma\right)_\gamma = 4 - \frac{t-1}{2}.$$
Then, $\left(\mathcal{D}^{\frac{1}{3}}_\Sigma\right)^2_\gamma = 169/9$, which shows the consistency of the calculation. Finally, for $t = 0$, the eigenvalue is given by $9/2$.
\item $V_\gamma = V_{(1,0,0)} \cong W_{(0,0)} \oplus W_{(1,1)}$. Then,
\begin{align*}
    \Hom\left(V_\gamma, \Delta\right)^{Sp(1)_u \otimes Sp(1)_d} \cong \Hom\left(V_{(1,0,0)}, \Delta\right)^{Sp(1)_u \otimes Sp(1)_d}
\end{align*}
is $2$-dimensional. With respect to a basis given by
\begin{align*}
    q^{(0,0)} : V_{(1,0,0)} \to W_{(0,0)} \to \Delta\\
    q^{(1,1)} : V_{(1,0,0)} \to W_{(1,1)} \to \Delta
\end{align*}
we have,
$$\left(\mathcal{D}^t_\Sigma\right)_\gamma = \begin{pmatrix}
    \frac{7}{2}(t-1) &\frac{8\sqrt{5}}{3}\\
    \frac{2\sqrt{5}}{3} &2+\frac{1-t}{2}
\end{pmatrix}.$$
We note that for $t = 1/3$, we have
$\left(\mathcal{D}^{\frac{1}{3}}_\Sigma\right)^2_\gamma = \diag(43/3, 43/3)$, which shows the consistency of the calculation. Finally, for $t = 0$, the eigenvalues are given by $\frac{1}{6}(-3-2\sqrt{161}), \frac{1}{6}(-3+2\sqrt{161})$. 
\item $V_\gamma = V_{(0,1,1)} \cong W_{(0,0)} \oplus W_{(1,1)} \oplus W_{(0,2)}$. Then,
\begin{align*}
    \Hom\left(V_\gamma, \Delta\right)^{Sp(1)_u \otimes Sp(1)_d} \cong \Hom\left(V_{(0,1,1)}, \Delta\right)^{Sp(1)_u \otimes Sp(1)_d}
\end{align*}
is $3$-dimensional. With respect to a basis given by
\begin{align*}
    q^{(0,0)} : V_{(0,1,1)} \to W_{(0,0)} \to \Delta\\
    q^{(1,1)} : V_{(0,1,1)} \to W_{(1,1)} \to \Delta\\
    q^{(0,2)} : V_{(0,1,1)} \to W_{(0,2)} \to \Delta
\end{align*}
we have,
$$\left(\mathcal{D}^t_\Sigma\right)_\gamma = \begin{pmatrix}
    \frac{7}{2}(t-1) &\frac{4\sqrt{5}}{3} &-5\\
    \frac{\sqrt{5}}{3} &-3+\frac{1-t}{2} &-\sqrt{5}\\
    -\frac{5}{3} &-\frac{4\sqrt{5}}{3} &\frac{2}{3}+\frac{1-t}{2}
\end{pmatrix}.$$
We note that for $t = 1/3$, we have
$\left(\mathcal{D}^{\frac{1}{3}}_\Sigma\right)^2_\gamma = \diag(16,16,16)$, which shows the consistency of the calculation. Finally, for $t = 0$, the eigenvalues are given by $\frac{1}{6}(-3-8\sqrt{11}), \frac{1}{6}(-3+8\sqrt{11}), -\frac{23}{6}$.
\item $V_\gamma = V_{(0,2,0)} \cong W_{(1,1)} \oplus W_{(2,0)} \oplus W_{(0,2)}$. Then,
\begin{align*}
    \Hom\left(V_\gamma, \Delta\right)^{Sp(1)_u \otimes Sp(1)_d} \cong \Hom\left(V_{(0,2,0)}, \Delta\right)^{Sp(1)_u \otimes Sp(1)_d}
\end{align*}
is $2$-dimensional. With respect to a basis given by
\begin{align*}
    q^{(0,2)} : V_{(0,2,0)} \to W_{(0,2)} \to \Delta\\
    q^{(1,1)} : V_{(0,2,0)} \to W_{(1,1)} \to \Delta
\end{align*}
we have,
$$\left(\mathcal{D}^t_\Sigma\right)_\gamma = \begin{pmatrix}
    -\frac{8}{3}+\frac{1-t}{2} &\frac{4\sqrt{10}}{3}\\
    \sqrt{10} &2+\frac{1-t}{2}
\end{pmatrix}.$$
We note that for $t = 1/3$, we have
$\left(\mathcal{D}^{\frac{1}{3}}_\Sigma\right)^2_\gamma = \diag(169/9, 169/9)$, which shows the consistency of the calculation. Finally, for $t = 0$, the eigenvalues are given by $\frac{9}{2},-\frac{25}{6}$.
\end{itemize}
\end{proof}

\subsection{Eigenvalues of the Twisted Dirac Operator}
Now, we compute the eigenvalues of the twisted Dirac operator. The main theorem is given as follows.
\begin{thm}\label{eigdir6}
The eigenvalues of the twisted Dirac operator $\left(\slashed{\mathfrak{D}}_{A_\Sigma}^0\right)_\gamma$ corresponding to the irreducible representations $V_\gamma$ listed in Corollary \ref{replist} are
    \begin{enumerate}
        \item $\frac{1}{2}$ for $V_\gamma = V_{(0,0,0)}$,
        \item $\frac{1}{2}(-1-2\sqrt{17}),\ \frac{1}{2}(-1+2\sqrt{17})$ for $V_\gamma = V_{(0,0,2)}$,
        \item $\frac{19}{6},\ -\frac{17}{6}$ for $V_\gamma = V_{(1,0,0)}$,
        \item $\frac{1}{6}(-3-16\sqrt{2}),\ \frac{1}{6}(-3+16\sqrt{2}),\ \frac{1}{6}(1-8\sqrt{6}),\ \frac{1}{6}(1+8\sqrt{6})$ for $V_\gamma = V_{(0,1,1)}$,
        \item $\frac{1}{2}(-1-2\sqrt{17}),\ \frac{1}{2}(-1+2\sqrt{17}),\ -\frac{7}{2}$ for $V_\gamma = V_{(0,2,0)}$.
    \end{enumerate}
\end{thm}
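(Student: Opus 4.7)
The plan is to mirror the strategy used in the proof of Theorem \ref{eigdirun}, now accounting for the twisting by $\mathfrak{sp}(1)_\mathbb{C} \cong W_{(0,2)}$. For each irreducible representation $V_\gamma = V_{(a,b,c)}$ listed in Corollary \ref{replist}, I would first recall its decomposition into $Sp(1)_u \times Sp(1)_d$-irreducibles (already computed in the untwisted case). Using the splitting $\Delta \cong \mathbb{C} \oplus \mathfrak{m}_\mathbb{C}^* \cong W_{(0,0)} \oplus W_{(1,1)} \oplus W_{(0,2)}$ and Clebsch--Gordan, the tensor product decomposes as
\begin{equation*}
    \Delta \otimes W_{(0,2)} \cong W_{(0,0)} \oplus 2\,W_{(0,2)} \oplus W_{(0,4)} \oplus W_{(1,1)} \oplus W_{(1,3)}.
\end{equation*}
Schur's lemma then provides a basis of $\Hom(V_\gamma, \Delta \otimes W_{(0,2)})^{Sp(1)_u \times Sp(1)_d}$ indexed by pairings of common $Sp(1)_u \times Sp(1)_d$-summands; a direct count yields Hom-dimensions $1, 2, 2, 4, 3$ for the five cases, matching the number of eigenvalues listed.

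For each $V_\gamma$, I would then compute the matrix of $\left(\slashed{\mathfrak{D}}^t_{A_\Sigma}\right)_\gamma$ in this basis using the formula
\begin{equation*}
    \eta \mapsto -I_a \cdot (\eta \circ \rho_{V_\gamma}(I_a)) + \tfrac{t-1}{2}\phi \cdot \eta
\end{equation*}
from \eqref{diractgamma}. This requires, for each basis map $q^{(p,q)} : V_\gamma \to W_{(p,q)} \hookrightarrow \Delta \otimes W_{(0,2)}$, evaluating the action of the generators $I_a \in \mathfrak{m}$ on $V_\gamma$, composing with $q^{(p,q)}$, and expressing Clifford multiplication by $I_a$ and by $\phi$ through their decompositions on the $W_{(p,q)}$-summands of $\Delta \otimes W_{(0,2)}$. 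The result is a small matrix depending affinely on $t$; setting $t = 1/3$ should produce a scalar multiple of the identity by the Casimir identity recalled before Theorem \ref{eigbnd2}, namely $\left(-c^{\mathfrak{sp}(2)\oplus\mathfrak{sp}(1)}_{(a,b,c)} + \tfrac{1}{9}\right)\Id$, which serves as a consistency check. Specialising to $t = 0$ and diagonalising the resulting matrix then yields the claimed eigenvalues.

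The main technical obstacle is the bookkeeping for $V_{(0,1,1)}$ and $V_{(0,2,0)}$, where $W_{(0,2)}$ appears with multiplicity two inside $\Delta \otimes W_{(0,2)}$ --- once through the $W_{(0,0)}$-summand of $\Delta$ tensored with $W_{(0,2)}$, and once via the Clebsch--Gordan projection of $W_{(0,2)} \otimes W_{(0,2)}$ onto $W_{(0,2)}$. Each copy of $W_{(0,2)}$ in $V_\gamma$ therefore contributes two basis vectors that can be mixed by off-diagonal Clifford terms, so care is required to choose the two embeddings $q^{(0,2)}$ in a compatible way and to track the induced signs in the $3$-form action $\phi \cdot \eta$. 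Fortunately the $t = 1/3$ consistency check --- which demands the square of the resulting $3 \times 3$ or $4 \times 4$ matrix to be a prescribed multiple of the identity --- is rigid enough to catch any computational error along the way.
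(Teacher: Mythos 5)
Your proposal follows the same route as the paper: decompose $\Delta \otimes W_{(0,2)}$ via Clebsch--Gordan, use Schur's lemma to identify bases of the invariant Hom spaces (with the correct dimensions $1,2,2,4,3$), assemble the matrix of $(\slashed{\mathfrak{D}}^t_{A_\Sigma})_\gamma$ from \eqref{diractgamma}, and verify at $t=1/3$ against the Casimir before specialising to $t=0$. You also correctly flag the multiplicity-two occurrence of $W_{(0,2)}$ as the delicate point, which the paper handles by labelling basis maps $q^{(p,q)}_{(a,b)(c,d)}$ to track both the source summand in $\Delta$ and the Clebsch--Gordan factor; your plan is sound and matches the paper's proof in both strategy and structure.
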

\begin{proof}
    We note that
\begin{align}
    \Delta \otimes \mathfrak{sp}(1)_{\mathbb{C}} \cong W_{(0,2)} \oplus [W_{(1,1)}\oplus W_{(1,3)}] \oplus [W_{(0,0)}\oplus W_{(0,2)}\oplus W_{(0,4)}].
\end{align}
Now,
$$L^2(\slashed{S}(\Sigma)\otimes \mathfrak{g}_P) = \bigoplus_{\gamma \in \widehat{Sp(2)\times Sp(1)}}\Hom\left(V_\gamma, \Delta \otimes W_{(0,2)}\right)^{Sp(1)_u \otimes Sp(1)_d} \otimes V_\gamma$$
where $\mathfrak{g}_P$ is the adjoint bundle of the $Sp(1)$-principal bundle $P = (Sp(2) \times Sp(1)) \times_\lambda Sp(1)$ over $\Sigma^7$, defined in section \ref{section3}. Hence, for each $\gamma$, the operator $\left(\slashed{\mathfrak{D}}^t_{A_\Sigma}\right)_\gamma$ defined in (\ref{diractgamma}), acts on the space $\Hom\left(V_\gamma, \Delta \otimes W_{(0,2)}\right)^{Sp(1)_u \otimes Sp(1)_d}$.\par 
We note that for $V_\gamma = V_{(0,0,1)}\cong W_{(0,1)}, V_{(1,0,1)}\cong W_{(0,1)} \oplus W_{(1,0)} \oplus W_{(1,2)}$ or $V_{(0,1,0)}\cong W_{(1,0)} \oplus W_{(0,1)}$, the space $\Hom\left(V_\gamma, \Delta \otimes W_{(0,2)}\right)^{Sp(1)_u \otimes Sp(1)_d}$ is a $0$-dimensional vector space, by Schur's lemma.
\begin{itemize}
\item $V_\gamma = V_{(0,0,0)} \cong W_{(0,0)}$. Then,
\begin{align*}
    \Hom\left(V_\gamma, \Delta \otimes W_{(0,2)}\right)^{Sp(1)_u \otimes Sp(1)_d} \cong \Hom\left(W_{(0,0)}, \Delta \otimes W_{(0,2)}\right)^{Sp(1)_u \otimes Sp(1)_d}
\end{align*}
is $1$-dimensional. With respect to a basis is given by
$$q^{(0,0)}_{(0,2)(0,2)} : V_{(0,0,0)} \to W_{(0,0)} \to W_{(0,2)} \otimes W_{(0,2)} \to \Delta \otimes W_{(0,2)}$$
we have,
$$\left(\slashed{\mathfrak{D}}^0_{A_\Sigma}\right)_\gamma = \left(\slashed{\mathfrak{D}}^1_{A_\Sigma}\right)_\gamma -\frac{1}{2}\phi = -\frac{1}{2}\phi.$$
Then the eigenvalue of $\left(\slashed{\mathfrak{D}}_{A_\Sigma}^0\right)_\gamma$ is $\frac{1}{2}$ with multiplicity $1$.
\item $V_\gamma = V_{(0,0,2)} \cong W_{(0,2)}$. Then,
\begin{align*}
    \Hom\left(V_\gamma, \Delta \otimes W_{(0,2)}\right)^{Sp(1)_u \otimes Sp(1)_d} \cong \Hom\left(W_{(0,2)}, \Delta \otimes W_{(0,2)}\right)^{Sp(1)_u \otimes Sp(1)_d}
\end{align*}
is $2$-dimensional. With respect to a basis given by
\begin{align*}
    q^{(0,2)}_{(0,0)(0,2)} : V_{(0,0,2)} \to W_{(0,2)} \to W_{(0,0)} \otimes W_{(0,2)} \to \Delta \otimes W_{(0,2)}\\
    q^{(0,2)}_{(0,2)(0,2)} : V_{(0,0,2)} \to W_{(0,2)} \to W_{(0,2)} \otimes W_{(0,2)} \to \Delta \otimes W_{(0,2)}
\end{align*}
we have,
$$\left(\slashed{\mathfrak{D}}^t_{A_\Sigma}\right)_\gamma = \begin{pmatrix}
    \frac{7}{2}(t-1) &4\\
    2 &2+\frac{1-t}{2}
\end{pmatrix}.$$
We note that for $t = 1/3$, we have
$\left(\slashed{\mathfrak{D}}^{\frac{1}{3}}_{A_\Sigma}\right)^2_\gamma = \diag(121/9, 121/9)$, which shows the consistency of the calculation. Finally, for $t = 0$, the eigenvalues are given by $\frac{1}{2}(-1-2\sqrt{17}),\frac{1}{2}(-1+2\sqrt{17})$. 
\item $V_\gamma = V_{(1,0,0)} \cong W_{(0,0)} \oplus W_{(1,1)}$. Then,
\begin{align*}
    \Hom\left(V_\gamma, \Delta \otimes W_{(0,2)}\right)^{Sp(1)_u \otimes Sp(1)_d} \cong \Hom\left(V_{(1,0,0)}, \Delta \otimes W_{(0,2)}\right)^{Sp(1)_u \otimes Sp(1)_d}
\end{align*}
is $2$-dimensional. With respect to a basis given by
\begin{align*}
    q^{(0,0)}_{(0,2)(0,2)} : V_{(1,0,0)} \to W_{(0,0)} \to W_{(0,2)} \otimes W_{(0,2)} \to \Delta \otimes W_{(0,2)}\\
    q^{(1,1)}_{(0,2)(0,2)} : V_{(1,0,0)} \to W_{(1,1)} \to W_{(1,1)} \otimes W_{(0,2)} \to \Delta \otimes W_{(0,2)}
\end{align*}
we have,
$$\left(\slashed{\mathfrak{D}}^t_{A_\Sigma}\right)_\gamma = \begin{pmatrix}
    \frac{1-t}{2} & \frac{8\sqrt{5}}{3}\\
    \frac{2\sqrt{5}}{3} & -\frac{2}{3}+\frac{1-t}{2}.
\end{pmatrix}.$$
We note that for $t = 1/3$, we have
$\left(\slashed{\mathfrak{D}}^{\frac{1}{3}}_{A_\Sigma}\right)^2_\gamma = \diag(9, 9)$, which shows the consistency of the calculation. Finally, for $t = 0$, the eigenvalues are given by $\frac{19}{6},-\frac{17}{6}$.
\item $V_\gamma = V_{(0,1,1)} \cong W_{(0,0)} \oplus W_{(1,1)} \oplus W_{(0,2)}$. Then,
\begin{align*}
    \Hom\left(V_\gamma, \Delta \otimes W_{(0,2)}\right)^{Sp(1)_u \otimes Sp(1)_d} \cong \Hom\left(V_{(0,1,1)}, \Delta \otimes W_{(0,2)}\right)^{Sp(1)_u \otimes Sp(1)_d}
\end{align*}
is $4$-dimensional. With respect to a basis given by
\begin{align*}
    q^{(0,0)}_{(0,2)(0,2)} : V_{(0,1,1)} \to W_{(0,0)} \to W_{(0,2)} \otimes W_{(0,2)} \to \Delta \otimes W_{(0,2)}\\
    q^{(1,1)}_{(1,1)(0,2)} : V_{(0,1,1)} \to W_{(1,1)} \to W_{(1,1)} \otimes W_{(0,2)} \to \Delta \otimes W_{(0,2)}\\
    q^{(0,2)}_{(0,0)(0,2)} : V_{(0,1,1)} \to W_{(0,2)} \to W_{(0,0)} \otimes W_{(0,2)} \to \Delta \otimes W_{(0,2)}\\
    q^{(0,2)}_{(0,2)(0,2)} : V_{(0,1,1)} \to W_{(0,2)} \to W_{(0,2)} \otimes W_{(0,2)} \to \Delta \otimes W_{(0,2)}
\end{align*}
we have,
$$\left(\slashed{\mathfrak{D}}^t_{A_\Sigma}\right)_\gamma = \begin{pmatrix}
   \frac{1-t}{2} &\frac{4\sqrt{5}}{3} &\frac{5}{3} &\frac{10}{3} \\
   \frac{\sqrt{5}}{3} &1+\frac{1-t}{2} &\frac{\sqrt{5}}{3} &-\frac{2\sqrt{5}}{3} \\
   \frac{5}{3} &\frac{4\sqrt{5}}{3} &\frac{7}{2}(t-1) &\frac{2}{3} \\
   \frac{5}{3} &-\frac{4\sqrt{5}}{3} &\frac{1}{3} &\frac{1}{3}+\frac{1-t}{2} 
\end{pmatrix}.$$
We note that for $t = 1/3$, we have
$\left(\slashed{\mathfrak{D}}^{\frac{1}{3}}_{A_\Sigma}\right)^2_\gamma = \diag(32/3,32/3,32/3,32/3)$, which shows the consistency of the calculation. Finally, for $t = 0$, the eigenvalues are given by $\frac{1}{6}(-3-16\sqrt{2}), \frac{1}{6}(-3+16\sqrt{2}), \frac{1}{6}(1-8\sqrt{6}), \frac{1}{6}(1+8\sqrt{6})$.
\item $V_\gamma = V_{(0,2,0)} \cong W_{(1,1)} \oplus W_{(2,0)} \oplus W_{(0,2)}$. Then,
\begin{align*}
    \Hom\left(V_\gamma, \Delta \otimes W_{(0,2)}\right)^{Sp(1)_u \otimes Sp(1)_d} \cong \Hom\left(V_{(0,1,1)}, \Delta \otimes W_{(0,2)}\right)^{Sp(1)_u \otimes Sp(1)_d}
\end{align*}
is $3$-dimensional. With respect to a basis given by
\begin{align*}
    q^{(1,1)}_{(1,1)(0,2)} : V_{(0,2,0)} \to W_{(1,1)} \to W_{(1,1)} \otimes W_{(0,2)} \to \Delta \otimes W_{(0,2)}\\
    q^{(0,2)}_{(0,0)(0,2)} : V_{(0,2,0)} \to W_{(0,2)} \to W_{(0,0)} \otimes W_{(0,2)} \to \Delta \otimes W_{(0,2)}\\
    q^{(0,2)}_{(0,2)(0,2)} : V_{(0,2,0)} \to W_{(0,2)} \to W_{(0,2)} \otimes W_{(0,2)} \to \Delta \otimes W_{(0,2)}.
\end{align*}
we have,
$$\left(\slashed{\mathfrak{D}}^t_{A_\Sigma}\right)_\gamma = \begin{pmatrix}
   -\frac{2}{3}+\frac{1-t}{2} &-\frac{\sqrt{10}}{3} &\frac{2\sqrt{10}}{3}\\
   -\frac{4\sqrt{10}}{3} &\frac{7}{2}(t-1) & -\frac{8}{3}\\
   \frac{4\sqrt{10}}{3} &-\frac{4}{3} & -\frac{4}{3}+\frac{1-t}{2} 
\end{pmatrix}.$$
We note that for $t = 1/3$, we have
$\left(\slashed{\mathfrak{D}}^{\frac{1}{3}}_{A_\Sigma}\right)^2_\gamma = \diag(121/9,121/9,121/9)$, which shows the consistency of the calculation. Finally, for $t = 0$, the eigenvalues are given by $\frac{1}{2}(-1-2\sqrt{17}),\ \frac{1}{2}(-1+2\sqrt{17}),\ -\frac{7}{2}$. 
\end{itemize}
\end{proof}
\begin{cor}\label{eigdircor6}
    The only eigenvalue of the twisted Dirac operator $\slashed{\mathfrak{D}}_{A_\Sigma}^0$ in the interval $\left[-\frac{5}{2}, \frac{5}{2}\right]$ is $\frac{1}{2}$ corresponding to the trivial representation $V_{(0,0,0)}$.
\end{cor}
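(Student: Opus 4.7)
The plan is to combine the eigenvalue bound from Corollary \ref{replist} with the explicit eigenvalue list from Theorem \ref{eigdir6}. Corollary \ref{replist} already tells us that for any irreducible representation $V_\gamma$ of $Sp(2)\times Sp(1)$ not on the short list $V_{(0,0,0)}, V_{(1,0,0)}, V_{(0,0,1)}, V_{(0,1,0)}, V_{(1,0,1)}, V_{(0,1,1)}, V_{(0,2,0)}, V_{(0,0,2)}$, the block $\left(\slashed{\mathfrak{D}}_{A_\Sigma}^0\right)_\gamma$ has no eigenvalue in $\left[-\tfrac{5}{2},\tfrac{5}{2}\right]$. By the Frobenius decomposition (\ref{frob2}), the spectrum of $\slashed{\mathfrak{D}}_{A_\Sigma}^0$ on $L^2(\slashed{S}(\Sigma)\otimes\mathfrak{g}_P)$ is the union of the spectra of all $\left(\slashed{\mathfrak{D}}_{A_\Sigma}^0\right)_\gamma$, so only the eight listed representations can contribute eigenvalues in $\left[-\tfrac{5}{2},\tfrac{5}{2}\right]$.

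Next, of these eight representations, three of them, namely $V_{(0,0,1)}$, $V_{(0,1,0)}$, $V_{(1,0,1)}$, yield the zero space $\Hom(V_\gamma,\Delta\otimes W_{(0,2)})^{Sp(1)_u\times Sp(1)_d}$ by Schur's lemma (this was already noted at the start of the proof of Theorem \ref{eigdir6}), so they contribute nothing to the spectrum. It remains to check the five representations $V_{(0,0,0)}, V_{(0,0,2)}, V_{(1,0,0)}, V_{(0,1,1)}, V_{(0,2,0)}$ whose eigenvalues were computed in Theorem \ref{eigdir6}.

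The final step is a direct numerical check against the interval $\left[-\tfrac{5}{2},\tfrac{5}{2}\right]$. For $V_{(0,0,0)}$ we get $\tfrac{1}{2}$, which lies inside. For $V_{(0,0,2)}$ the eigenvalues $\tfrac{1}{2}(-1\pm 2\sqrt{17})$ have absolute value larger than $\tfrac{5}{2}$ since $\sqrt{17} > 2$. For $V_{(1,0,0)}$ both $\tfrac{19}{6}$ and $-\tfrac{17}{6}$ lie outside since $\tfrac{17}{6}>\tfrac{15}{6}=\tfrac{5}{2}$. For $V_{(0,1,1)}$ the eigenvalues $\tfrac{1}{6}(-3\pm 16\sqrt{2})$ and $\tfrac{1}{6}(1\pm 8\sqrt{6})$ are all of absolute value greater than $\tfrac{5}{2}$, using $16\sqrt{2}>18$ and $8\sqrt{6}>16$. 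For $V_{(0,2,0)}$ the additional eigenvalue $-\tfrac{7}{2}$ is also outside. Hence the only eigenvalue of $\slashed{\mathfrak{D}}_{A_\Sigma}^0$ in $\left[-\tfrac{5}{2},\tfrac{5}{2}\right]$ is $\tfrac{1}{2}$, from the trivial representation.

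No step here is really difficult; the entire content of the corollary is a careful sifting of the output of Theorem \ref{eigdir6}. The only mild subtlety is to verify each of the roughly a dozen numerical inequalities bounding the relevant surds away from $\tfrac{5}{2}$, but these are all elementary.
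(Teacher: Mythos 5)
Your proof is correct and follows exactly the route the paper leaves implicit: Corollary~\ref{replist} restricts attention to the eight listed representations, Schur's lemma kills three of them, and a direct check of the eigenvalues listed in Theorem~\ref{eigdir6} shows only $\tfrac{1}{2}$ lands in $\left[-\tfrac{5}{2},\tfrac{5}{2}\right]$. One trivial imprecision: for $V_{(0,0,2)}$ (and $V_{(0,2,0)}$) the positive root $\tfrac{1}{2}(-1+2\sqrt{17})$ exceeds $\tfrac{5}{2}$ because $\sqrt{17}>3$, not merely because $\sqrt{17}>2$; since $17>9$ the conclusion is unaffected.
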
 

\section{Deformations of Clarke--Oliveira's Instantons}\label{section5}
In this section we calculate the deformations of Clarke--Oliveira's instantons and calculate the virtual dimensions of the moduli spaces, following similar techniques as in \cite{me2023paper}. We compute the index of the twisted Dirac operator $\slashed{\mathfrak{D}}_A^- : W^{k,2}_{-\frac{7}{2}} \to W^{k-1,2}_{-\frac{9}{2}}$ on $\slashed{S}^-(S^4)$, where $A$ is Clarke--Oliveira's instanton (the one parameter family or the limiting instanton), using the Atiyah--Patodi--Singer theorem and various other techniques developed in \cite{me2023paper}, \cite{Atiyah-patodi1975:1}, \cite{Atiyah-patodi1976:2}, \cite{Atiyah-patodi1976:3}, \cite{eguchi1980}, \cite{galkey1981}.

\subsection{Index of the Twisted Dirac Operator}

Let $g_C$ be the Bryant--Salamon metric (\ref{bsmetric}) on $\slashed{S}^-(S^4)$. We define the asymptotically cylindrical ``cigar'' metric $g_{CI} := \frac{1}{\varrho^2}g_C$, where $\varrho$ is the radius function (see \cite{me2023paper}).\par
\begin{figure}[H]
	\centering
	\begin{tikzpicture}
		\draw[dashed] (4,0) ellipse (0.3cm and 1cm);
		\draw (0,1) -- (4,1);
		\draw (0,-1) -- (4,-1);
		\draw (0,1) arc[start angle=90, end angle=270,radius=1cm];
		\draw (-1.8,0) node {$r = 0$};
        \draw (-1,-1) node {$S^4$};
		\draw (4,-1.3) node {$\Sigma^7$};
		\draw (5,0) node {$r \to \infty$};
	\end{tikzpicture}
 \caption{$\slashed{S}^-(S^4)$ with cigar metric.}
	\label{fig:cigar6}
\end{figure}\noindent
Then, we have \cite{me2023paper}
\begin{equation}\label{indrate2}
    \ind\left(\slashed{\mathfrak{D}}_{A,C}^- : W^{k,2}_{-\frac{7}{2}} \to W^{k-1,2}_{-\frac{9}{2}}\right) = \ind\left(\slashed{\mathfrak{D}}_{A,{CI}}^- : W^{k,2}_{CI} \to W^{k-1,2}_{CI}\right).
\end{equation}
where $A$ is either $A_{y_0}$ or $A_{\text{lim}}$.\par
Throughout this section, we identify $\mathbb{R} \times \Sigma^7$ with $(0,\infty) \times \Sigma^7$ via $t = \ln r$ for $r \in (0,\infty)$ and $t \in \mathbb{R}$.\par
Define a function $\overline{\varphi} : \mathbb{R} \to \mathbb{R}$ by
\begin{equation}\label{phibar'}
    \overline{\varphi}(t) = \begin{cases}
    -3 &t < -T\\
    a', &-T < t < -\frac{T}{2}\\
    \varphi(t), &-\frac{T}{2} < t < \frac{T}{2}\\
    a, &\frac{T}{2} < t < T\\
    0 &t > T
    \end{cases}
\end{equation}
where $a$ is a smooth interpolation between its values at $\frac{T}{2}$ and $T$ and $a'$ is that of between its values at $-T$ and $-\frac{T}{2}$.\par 
Consider the connection on $\slashed{S}^-(S^4)$ given by
\begin{equation}
    \overline{A} = A_\Sigma + \overline{\varphi}(t)e^aT_a.
\end{equation}
This connection has the same limits at $ t = \pm \infty$ as Clarke--Oliveira's instantons $A$.\par
We note that $\slashed{S}^-(S^4)$ can be considered as the space $[0,\infty) \times (Sp(2) \times Sp(1))/\sim$ where
\begin{align}\label{simlev}
    (r,g) &\sim (r, g\cdot h)\ \text{ for all }r >0,\ h \in Sp(1)^2\nonumber\\
    (0,g) &\sim (0, g\cdot h)\ \text{ for all }h \in Sp(1)^3.
\end{align}
\begin{prop}\label{indexequal2}
	Let $K^8_R := [0,R) \times (Sp(2) \times Sp(1))/\sim$ be a compact $8$-dimensional subset of $\slashed{S}^-(S^4)$, where $R >0$ and $\sim$ is defined as (\ref{simlev}). Then for sufficiently large $R$, we have
	$$\ind\left(\slashed{\mathfrak{D}}_{A,{CI}}^-, \slashed{S}^-(S^4), g_{CI}\right) = \ind\left(\slashed{\mathfrak{D}}_{\overline{A},{CI}}^-, K^8_R, g_{CI}\right),$$
 and for sufficiently large $T$, we have
 $$\ind\left(\slashed{\mathfrak{D}}_A^-, \mathbb{R} \times \Sigma^7, g\right) = \ind\left(\slashed{\mathfrak{D}}_{\overline{A}}^-, [-T,T] \times \Sigma^7, g\right).$$
 where $g$ is the cylindrical metric given by $g = dt^2 +g_{\Sigma^7}$.
\end{prop}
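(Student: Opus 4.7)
The plan is to first show that replacing $A$ by $\overline{A}$ on the full non-compact manifold preserves the Fredholm index, and then to reduce the full-manifold index to the one on the compact piece by exploiting the fact that $\slashed{\mathfrak{D}}_{\overline{A}}^-$ is translation invariant outside the compact region.

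For the first step I would consider the linear homotopy $A_s := sA + (1-s)\overline{A}$, $s\in[0,1]$. Since $\overline{\varphi}(t) = \varphi(t)$ on $(-T/2, T/2)$, the difference $A - \overline{A} = (\varphi(t) - \overline{\varphi}(t))e^a T_a$ is supported in $\{|t|\geq T/2\}$, and on each end $\varphi$ and $\overline{\varphi}$ share the same limit ($0$ at $+\infty$ and $-3$ at $-\infty$). Hence every $A_s$ is asymptotic to $A_\Sigma$ at $+\infty$ and, on $\mathbb{R}\times\Sigma^7$, to $A_0$ at $-\infty$, with decay rate $-2$. By Theorem \ref{indjump} together with Corollary \ref{eigdircor6} (which confirms that $-\frac{7}{2} + \frac{5}{2} = -1$ lies in the resolvent set of $\slashed{\mathfrak{D}}_{A_\Sigma}$), each $\slashed{\mathfrak{D}}_{A_s}^-$ is Fredholm on the prescribed weighted Sobolev space. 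Continuity of the family in operator norm together with homotopy invariance of the Fredholm index then gives $\ind \slashed{\mathfrak{D}}_A^- = \ind \slashed{\mathfrak{D}}_{\overline{A}}^-$ on the full manifold in both settings.

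For the second step I would pass from the full manifold to the compact piece. For $t > T$, and in the cylindrical case also $t < -T$, the connection $\overline{A}$ is $t$-independent, so $\slashed{\mathfrak{D}}_{\overline{A}}^-$ takes the product form $\partial_t + B$ with $B$ a self-adjoint Dirac-type operator on $\Sigma^7$ (on the positive end, $B$ is essentially $\slashed{\mathfrak{D}}_{A_\Sigma}$ up to zeroth order, and on the negative end of the cylinder it is twisted by the flat connection $A_0$, whose spectrum coincides with that of the untwisted $\mathcal{D}_\Sigma$ by Remark \ref{untflat}). Separation of variables then expresses any $L^2$ kernel or cokernel element on the cylindrical end as $\sum_\lambda e^{-\lambda t}\psi_\lambda$ with $\psi_\lambda$ a $\lambda$-eigenfunction of $B$, and the spectral gaps provided by Theorem \ref{eigdirun} and Corollary \ref{eigdircor6} at weight $-\frac{5}{2}$ guarantee exponential decay of all such solutions. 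Standard Atiyah--Patodi--Singer theory \cite{Atiyah-patodi1975:1} then identifies the index on the full manifold with the index on the compact truncation equipped with APS boundary conditions given by the positive (resp.\ negative) spectral projection of $B$ at the outgoing (resp.\ incoming) end, for $R$ and $T$ taken large enough that $\overline{A}$ is exactly translation invariant outside the compact piece.

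The main obstacle is ensuring that the homotopy $A_s$ remains Fredholm throughout, equivalently that no eigenvalue of the asymptotic boundary operator crosses the critical weight $-\frac{5}{2}$ along the path; this is precisely the content of the spectral computations in Section \ref{section4}. A secondary technical point is the careful bookkeeping of the zeroth-order term induced on $B$ by the conformal change from $g_C$ to the cigar metric $g_{CI}$: the shift does not affect the Fredholm property at the chosen weight but must be tracked to identify the correct APS projection, and it is the reason why the untwisted eigenvalues of Theorem \ref{eigdirun} enter the analysis alongside the twisted eigenvalues of Theorem \ref{eigdir6}.
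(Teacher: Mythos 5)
Your two-step strategy---first replace $A$ by $\overline{A}$ on the full non-compact manifold via a linear homotopy and homotopy invariance of the Fredholm index, then invoke the Atiyah--Patodi--Singer identification of the $L^2$-index on a manifold with product-type cylindrical ends with the APS-boundary-value index on a compact truncation---is the standard route for such excision statements and is essentially what the proof of Proposition 5.1 in \cite{me2023paper} (to which the paper defers) carries out. The two load-bearing ingredients you identify are the right ones: that each $A_s$ has unchanged asymptotic limits (so Fredholmness is controlled by the fixed boundary operators $\slashed{\mathfrak{D}}_{A_\Sigma}$ at $+\infty$, and $\slashed{\mathfrak{D}}_{A_0}\simeq\mathcal{D}_\Sigma$ at $-\infty$ in the cylindrical case), and that the absence of a zero eigenvalue for these boundary operators rules out half-bound states so the $L^2$-kernel and cokernel consist of exponentially decaying sections.

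Two inaccuracies, neither of which affects the conclusion but which you should repair. First, you test the wrong number against the spectrum: the Sobolev weight on the domain in (\ref{indrate2}) is $\nu-1=-\tfrac{7}{2}$, so the rate is $\nu=-\tfrac{5}{2}$ and the Fredholm criterion from Theorem~\ref{indjump} is $\nu+\tfrac{5}{2}=0\notin\Spec\slashed{\mathfrak{D}}_{A_\Sigma}$, not $-\tfrac{7}{2}+\tfrac{5}{2}=-1$; you have confused the Sobolev weight with the rate (fortunately both $-1$ and $0$ lie in the resolvent set by Corollary~\ref{eigdircor6}, so the argument still closes). Second, your closing remark misattributes the role of the untwisted eigenvalues: they do not arise from a zeroth-order term induced by the conformal change $g_C\mapsto g_{CI}$, but because the $t\to-\infty$ end of the cylinder $\mathbb{R}\times\Sigma^7$ carries the flat connection $A_0$, whose twisted Dirac operator has the same spectrum as $\mathcal{D}_\Sigma$ by Remark~\ref{untflat}; relatedly, there is no issue of ``eigenvalues crossing along the path,'' since the asymptotic limit operators are identical for all $A_s$ and hence their spectra do not move. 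You may also wish to note that ruling out a zero eigenvalue of $\mathcal{D}_\Sigma$ at the flat end follows from Lichnerowicz and the positivity of the squashed scalar curvature, since Theorem~\ref{eigdirun} only lists eigenvalues for finitely many $V_\gamma$.
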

The proof is similar to the proof of proposition 5.1 in \cite{me2023paper}.\par
Now, the index of the twisted Dirac operator $\slashed{\mathfrak{D}}_{\overline{A},{CI}}^-$ on $K^8_R$ is given by the Atiyah--Patodi--Singer index theorem, which states that
\begin{align}\label{APS6.1}
    \ind\left(\slashed{\mathfrak{D}}_{\overline{A},{CI}}^-, K^8_R, g_{CI}\right) = I\left(\slashed{\mathfrak{D}}_{\overline{A},{CI}}^-, K^8_R, g_{CI}\right) + \frac{1}{2}\eta(\slashed{\mathfrak{D}}_{A_\Sigma}, \partial K^8_R).
\end{align}
By proposition \ref{indexequal2}, the index is independent of $R$, hence taking $R \to \infty$, we have,
\begin{align}\label{APS6.2}
    \ind\left(\slashed{\mathfrak{D}}_{\overline{A},{CI}}^-, \slashed{S}^-(S^4), g_{CI}\right) = I\left(\slashed{\mathfrak{D}}_{\overline{A},{CI}}^-, \slashed{S}^-(S^4), g_{CI}\right) + \frac{1}{2}\eta(\slashed{\mathfrak{D}}_{A_\Sigma}, \Sigma^7).
\end{align}
where, the term $I(\slashed{\mathfrak{D}}_{A,{CI}}^-, \slashed{S}^-(S^4), g_{CI})$ in (\ref{APS6.2}) is given by
\begin{align}\label{iterm}
	I(\slashed{\mathfrak{D}}_{A,{CI}}^-, \slashed{S}^-(S^4), g_{CI}) &= - \int_{\slashed{S}^-(S^4)}\widehat{A}(\slashed{S}^-(S^4))\ch(\mathfrak{g}_P \otimes \mathbb{C})\nonumber\\
	&= -\int_{\slashed{S}^-(S^4)}\left(1 - \frac{1}{24}p_1(\slashed{S}^-(S^4)) + \frac{1}{5760}(7p_1(\slashed{S}^-(S^4))^2 - 4p_2(\slashed{S}^-(S^4)))\right)\nonumber\\
 &\hspace{5cm}\left(\dim\mathfrak{g} + p_1(\mathfrak{g}_P) + \frac{1}{12}\left(p_1(\mathfrak{g}_P)^2 - 2p_2(\mathfrak{g}_P)\right)\right)\nonumber\\
	&= -\frac{1}{12}\int_{\slashed{S}^-(S^4)}\left(p_1(\mathfrak{g}_P)^2 - 2p_2(\mathfrak{g}_P)\right) + \frac{1}{24}\int_{\slashed{S}^-(S^4)}p_1(\slashed{S}^-(S^4))p_1(\mathfrak{g}_P)\nonumber\\
    &\hspace{0.4cm} - \frac{1}{5760}\dim\mathfrak{g}\int_{\slashed{S}^-(S^4)}(7p_1(\slashed{S}^-(S^4))^2 - 4p_2(\slashed{S}^-(S^4))),
\end{align}
where $p_i$ denotes the $i$th Pontryagin class.
\subsection{Eta Invariant of the Boundary}
We calculate the eta invariant $\eta(\slashed{\mathfrak{D}}_{A_\Sigma}, \Sigma^7)$ of the twisted Dirac operator by first, relating it to the untwisted Dirac operator on the squashed sphere, and then, relating it to the untwisted Dirac operator on the round sphere, whose eta invariant is known to be zero.\par 
Recall Clarke--Oliveira's instanton (\ref{COinst}) with $\varphi$ given by (\ref{valueofphi}). The instanton can be identified with a family of connections $\{A_t : t \in \mathbb{R}\}$ on $\Sigma^7$, where $t = \ln r$.
The family of Dirac operators twisted by the connections $A_t$ is given by
\begin{align}\label{diracfam}
    \slashed{\mathfrak{D}}_{A_{t, \Sigma}} &= \slashed{\mathfrak{D}}_{A_\Sigma} + \varphi(t)e^aT_a\\
    &= \slashed{\mathfrak{D}}_{A_\Sigma} + \frac{1}{3}\varphi(t)\widehat{e}^aT_a,
\end{align}
where $\varphi(t)$ varies from $-3$ to $0$. Then, from (\ref{flatcon}), (\ref{diracfam}) and remark \ref{untflat}, we note that, for $\varphi(t) = 0$, we have the Dirac operator $\slashed{\mathfrak{D}}_{A_\Sigma}$ twisted by the canonical connection, and for $\varphi(t) = -3$, we have the Dirac operator twisted by the flat connection, given by
\begin{align}\label{flatcan}
    \mathcal{D}_{\Sigma} = \slashed{\mathfrak{D}}_{A_\Sigma} -\widehat{e}^aT_a.
\end{align}
We identify the family of Dirac operators $\left\{\slashed{\mathfrak{D}}_{A_{t, \Sigma}}\right\}_{t \in \mathbb{R}}$ on $\Sigma^7$ with the Dirac operator $\slashed{\mathfrak{D}}_A^-$ on the cylinder $\slashed{C} := \mathbb{R} \times \Sigma^7$, given by
$$\slashed{\mathfrak{D}}_A^- = dt \cdot \left(\frac{d}{dt} - \slashed{\mathfrak{D}}_{A_{t, \Sigma}}\right).$$
\begin{figure}[ht]
	\centering
	\begin{tikzpicture}
		\draw (0,0) ellipse (0.3cm and 1cm);   
		\draw[dashed] (4,0) ellipse (0.3cm and 1cm);
		\draw (0,1) -- (4,1);
		\draw (0,-1) -- (4,-1);
		\draw (0,-1.3) node {$\Sigma^7$};
        \draw (0,-2) node {$A_0$};
		\draw (4,-1.3) node {$\Sigma^7$};
        \draw (4,-2) node {$A_\Sigma$};
        \draw (2,-1.7) node {$\slashed{C}_A$};
	\end{tikzpicture}
 \caption{The cylinder $\slashed{C}$.}
	\label{fig:cigar6.1}
\end{figure}\par
Then, from \cite{kronheimer-mrowka}, the index of the Dirac operator $\slashed{\mathfrak{D}}_A^-$ on $\mathbb{R} \times \Sigma^7$ is
\begin{align}\label{indconcone}
    \Ind (\slashed{\mathfrak{D}}_A^-, \slashed{C}) = -\Sf\left(\left\{\slashed{\mathfrak{D}}_{A_{t, \Sigma}}\right\}_{t \in \mathbb{R}}\right).
\end{align}
Now, from Proposition \ref{indexequal2} and applying the Atiyah--Patodi--Singer index formula on $[-T, T] \times \Sigma^7$, we have
$$\Ind (\slashed{\mathfrak{D}}_A^-, \slashed{C}) = \Ind (\slashed{\mathfrak{D}}_{\overline{A}}^-, [-T, T] \times \Sigma^7) = I\left(\slashed{\mathfrak{D}}_{\overline{A}}^-, [-T, T] \times \Sigma^7\right) + \frac{1}{2}\eta(\partial([-T, T] \times \Sigma^7))$$
where the term $\eta(\partial([-T, T] \times \Sigma^7))$ is the eta invariant for the operator $\slashed{\mathfrak{D}}_{\overline{A}}^-$ restricted to the submanifold $\partial([-T, T] \times \Sigma^7)$. By proposition \ref{indexequal2}, since $\Ind (\slashed{\mathfrak{D}}_A^-, \slashed{C})$ is independent of $T$, taking $T \to \infty$, we get,
$$\Ind (\slashed{\mathfrak{D}}_A^-, \slashed{C}) = I\left(\slashed{\mathfrak{D}}_{\overline{A}}^-, \slashed{C}\right) + \frac{1}{2}\eta(\partial\slashed{C})$$
where $\eta(\partial\slashed{C})$ is the eta invariant for $\slashed{\mathfrak{D}}_{\overline{A}}^-$ restricted to the submanifold $\partial\slashed{C}$.\par 
Now, from (\ref{indexequal2}), we have
$$I\left(\slashed{\mathfrak{D}}_{\overline{A}}^-, \slashed{C}\right)  = I\left(\slashed{\mathfrak{D}}_A^-, \mathbb{R} \times \Sigma^7\right)$$
Moreover, since $\partial\slashed{C} = \Sigma^7 \amalg \overline{\Sigma^7}$, where $\overline{\Sigma^7}$ is $\Sigma^7$ with opposite orientation, from (\ref{flatcan}) we have
$$\eta(\partial\slashed{C}) = \eta(\mathcal{D}_{\Sigma}, \overline{\Sigma^7}) + \eta(\slashed{\mathfrak{D}}_{A_\Sigma}, \Sigma^7) = \eta(\slashed{\mathfrak{D}}_{A_\Sigma}, \Sigma^7) - \eta(\mathcal{D}_{\Sigma}, \Sigma^7),$$
So, finally, we have
\begin{align}\label{etainv6.2}
	\frac{1}{2}\eta(\slashed{\mathfrak{D}}_{A_\Sigma}, \Sigma^7) &= \frac{1}{2}\eta(\partial\slashed{C}) + \frac{1}{2}\eta(\mathcal{D}_{\Sigma}, \Sigma^7)\nonumber\\
    &= \Ind (\slashed{\mathfrak{D}}_A^-, \slashed{C}) - I(\slashed{\mathfrak{D}}_A^-, \slashed{C}) +\frac{1}{2}\eta(\mathcal{D}_{\Sigma}, \Sigma^7)\nonumber\\
    &=  -\Sf\left(\left\{\slashed{\mathfrak{D}}_{A_{t, \Sigma}}\right\}_{t \in \mathbb{R}}\right) - I(\slashed{\mathfrak{D}}_A^-, \slashed{C}) +\frac{1}{2}\eta(\mathcal{D}_{\Sigma}, \Sigma^7).
\end{align}

Now, we note that since the squashed sphere does not have an orientation reversing isometry, the eta-invariant of the untwisted Dirac operator $\mathcal{D}_\Sigma$ may not be zero. So, to find the eta-invariant of the untwisted Dirac operator on the squashed sphere, we relate it to that of the round sphere for which we know the eta-invariant of the untwisted Dirac operator is zero.\par
Consider the cylinder $C_\Sigma := \Sigma \times \mathbb{R}$, and for $t \in \mathbb{R}$, a family of Riemannian manifolds $(\Sigma, g_t)$ where for $t = -\infty$ we have the squashed $7$-sphere $\Sigma^7$ and for $t = \infty$ we have the round $7$-sphere $S^7$. \begin{figure}[H]
	\centering
	\begin{tikzpicture}
		\draw [dashed] (1.2,0) ellipse (0.3cm and 2cm);   
		\draw [dashed] (5.2,0) ellipse (0.3cm and 1cm);
		\draw (1.2,2) .. controls (3.5,2) and (4,1) .. (5.2,1);
		\draw (1.2,-2) .. controls (3.5,-2) and (4,-1) .. (5.2,-1);
		\draw (1.2,-2.3) node {$S^7$};
		\draw (5.2,-1.3) node {$\Sigma^7$};
        \draw (3.4,-2.3) node {$C_\Sigma$};
	\end{tikzpicture}
 \caption{The cylinder $C_\Sigma$.}
	\label{fig:cigar6.2}
\end{figure}
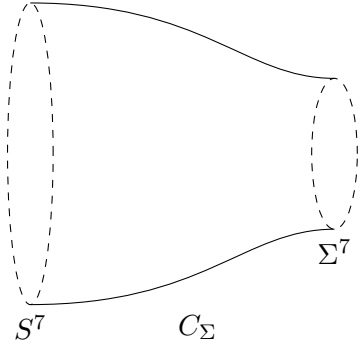\noindent
Consider corresponding family of untwisted Dirac operators $\{\mathcal{D}_{\Sigma,t}\}_{t \in \mathbb{R}}$ which we can identify with an untwisted Dirac operator $\mathcal{D}^-$ on the cylinder $C_\Sigma$. Then, using the result from \cite{kronheimer-mrowka}, we have that the index of the Dirac operator on the cylinder $C_\Sigma$ is the negative of the spectral flow of the family $\{\mathcal{D}_{\Sigma,t}\}_{t \in \mathbb{R}}$, i.e.,
\begin{align}
    \Ind(\mathcal{D}^-, C_\Sigma) = -\Sf\left(\left\{\mathcal{D}_{\Sigma,t}\right\}_{t \in \mathbb{R}}\right).
\end{align}

Now, from Proposition \ref{indexequal2} applying the Atiyah--Patodi--Singer index formula on $[-T, T] \times \Sigma^7$, we have
$$\Ind (\mathcal{D}^-, C_\Sigma) = I\left(\mathcal{D}^-, [-T, T] \times \Sigma^7\right) + \frac{1}{2}\eta(\mathcal{D}_\Sigma, \partial([-T, T] \times \Sigma^7)).$$
Since $\Ind (\mathcal{D}^-, C_\Sigma)$ is independent of $T$, taking $T \to \infty$, we get,
$$\Ind (\mathcal{D}^-, C_\Sigma) = I\left(\mathcal{D}^-, C_\Sigma\right) + \frac{1}{2}\eta(\partial(C_\Sigma)).$$
Moreover, since $\partial(C_\Sigma) = \Sigma^7 \amalg \overline{S^7}$, where $\overline{S^7}$ is $S^7$ with opposite orientation, we have
$$\eta(\partial(C_\Sigma)) = \eta(\mathcal{D}_\Sigma, \Sigma^7) - \eta(\mathcal{D}_{\Sigma}, S^7).$$
Hence, we have
\begin{align}\label{etainv6.3}
	\frac{1}{2}\eta(\mathcal{D}_\Sigma, \Sigma^7) &= \frac{1}{2}\eta(\partial(C_\Sigma)) + \frac{1}{2}\eta(\mathcal{D}_{\Sigma}, S^7)\nonumber\\
    &= \Ind(\mathcal{D}^-, C_\Sigma) - I(\mathcal{D}^-, C_\Sigma) + \frac{1}{2}\eta(\mathcal{D}_{\Sigma}, S^7)\nonumber\\
    &=  -\Sf\left(\left\{\mathcal{D}_{\Sigma,t}\right\}_{t \in \mathbb{R}}\right)- I(\mathcal{D}^-, C_\Sigma) + \frac{1}{2}\eta(\mathcal{D}_{\Sigma}, S^7).
\end{align}
Then, substituting (\ref{etainv6.3}) in (\ref{etainv6.2}) and using the fact that $\eta(\mathcal{D}_{\Sigma}, S^7) = 0$, we have 
\begin{align}\label{etainv6.4}
    \frac{1}{2}\eta(\slashed{\mathfrak{D}}_{A_\Sigma}, \Sigma^7) =  -\Sf\left(\left\{\slashed{\mathfrak{D}}_{A_{t, \Sigma}}\right\}_{t \in \mathbb{R}}\right) - I(\slashed{\mathfrak{D}}_A^-, \slashed{C}) -\Sf\left(\left\{\mathcal{D}_{\Sigma,t}\right\}_{t \in \mathbb{R}}\right)- I(\mathcal{D}^-, C_\Sigma).
\end{align}

\subsubsection*{Spectral Flow of the Connection}
We want to find the spectral flow of the family of Dirac operators (\ref{diracfam}). First we compute the eigenvalues of the operator $\widehat{e}^aT_a$ which acts fibre-wise, on $\Delta \otimes \mathfrak{sp}(1)$. Let $e^\mu,\ \mu = 0, 1, \dots, 7$ be a basis of $\Delta$ and $T_a$ is a basis of $\mathfrak{sp}(1)$, for $a = 1,2,3$. Then,
\begin{align*}
	&\ \ \left(\widehat{e}^aT_a\right)\left(\widehat{e}^\mu \otimes T_b\right)\\
	&= (\widehat{e}^a \cdot \widehat{e}^\mu) \otimes [T_a, T_b]\\
	&= (\widehat{E}^a \otimes \ad T_a)(\widehat{e}^\mu \otimes T_b),
\end{align*}
where $\widehat{E}^a$ is the matrix given by Clifford multiplication with $\widehat{e}^a$. Similarly, we calculate the matrices $\ad T_a$. We get the matrix of $\widehat{e}^aT_a$ by taking the Kronecker product of $\widehat{E}^a$ and $\ad T_a$. The eigenvalues of $\widehat{e}^aT_a$ are given in table \ref{tab:eigeneiti}.
\begin{table}[ht]
    \centering
	\renewcommand{\arraystretch}{1.2}
	\begin{tabular}{||c | c ||} 
		\hline
		Eigenvalues & Multiplicities  \\
		\hline\hline
		$4$ & $4$ \\
		\hline
		$-4$ & $4$ \\
		\hline
		$-2$ & $8$ \\
		\hline
		$2$ & $8$ \\
		\hline
	\end{tabular}
 \caption{Eigenvalues of $\widehat{e}^aT_a$ and corresponding multiplicities.}
	\label{tab:eigeneiti}
\end{table}\par
Now, the the highest magnitude among the eigenvalues of $\widehat{e}^aT_a$ is $4$. Hence, from (\ref{flatcan}), any flow from an eigenvalue of the twisted Dirac operator $\slashed{\mathfrak{D}}_{A_\Sigma}$ to an eigenvalue of the untwisted Dirac operator $\mathcal{D}_{\Sigma}$ can have a magnitude of maximum $4$. Moreover, it is evident that the only possible non-zero spectral flow would be a flow from the eigenvalue $1/2$ of $\slashed{\mathfrak{D}}_{A_\Sigma}$ to the eigenvalue $-7/2$ of $\mathcal{D}_{\Sigma}$, since any other flow of eigenvalues of $\slashed{\mathfrak{D}}_{A_\Sigma}$ to eigenvalues of $\mathcal{D}_{\Sigma}$ of opposite sign has magnitude greater than $4$. Now, we recall that $1/2$ is an eigenvalue of $\slashed{\mathfrak{D}}_{A_\Sigma}$ that corresponds to the trivial representation $V_{(0,0,0)}$ of $Sp(2) \times Sp(1)$. Hence, in the decomposition (\ref{frob2}), the eigenspinor $\eta$ corresponding to eigenvalue $1/2$ belongs to the space $\Hom(V_{(0,0,0)}, \Delta \otimes \mathfrak{sp}(1)_\mathbb{C})^{\mathfrak{sp}(1)_u\oplus\mathfrak{sp}(1)_d} \otimes V_{(0,0,0)} \subset L^2(Sp(2) \times Sp(1), \Delta \otimes \mathfrak{sp}(1)_\mathbb{C})^{\mathfrak{sp}(1)_u\oplus\mathfrak{sp}(1)_d}$. Then, from the decomposition
$$\Delta \otimes \mathfrak{sp}(1)_\mathbb{C} \cong W_{(0,0)}\oplus 2W_{(0,2)}\oplus W_{(1,1)}\oplus W_{(1,3)} \oplus  W_{(0,4)},$$
and by Schur's lemma, we have that $\eta \in \Hom(V_{(0,0,0)}, W_{(0,0)})^{\mathfrak{sp}(1)_u\oplus\mathfrak{sp}(1)_d} \otimes V_{(0,0,0)}$ which is a subspace of $L^2(Sp(2) \times Sp(1), \Delta \otimes \mathfrak{sp}(1)_\mathbb{C})^{\mathfrak{sp}(1)_u\oplus\mathfrak{sp}(1)_d}$. Hence, we compute the eigenvalue of $\widehat{e}^aT_a$ corresponding to the trivial subrepresentation $W_{(0,0)}$ of $\Delta \otimes \mathfrak{sp}(1)_\mathbb{C}$.\par
A direct calculation shows that the eigenvalue is $-4$. Thus we have a flow of the eigenvalue moving up to $9/2$ and not down to $-7/2$. Thus, the spectral flow of the family $\left\{\slashed{\mathfrak{D}}_{A_{t, \Sigma}}\right\}_{t \in \mathbb{R}}$ is given by
\begin{equation}\label{spflow2}
    \Sf\left(\left\{\slashed{\mathfrak{D}}_{A_{t, \Sigma}}\right\}_{t \in \mathbb{R}}\right) = 0.
\end{equation}

\subsubsection*{Spectral Flow of the Metric}

Now, we want to find the spectral flow of the family of Dirac operators $\{\mathcal{D}_{\Sigma,t}\}_{t \in \mathbb{R}}$. Consider the Lichnerowicz--Weitzenb\"ock formula for the family of Riemannian manifolds $(\Sigma, g_t)$, given by
\begin{align}
    \left(\mathcal{D}_{\Sigma,t}\right)^2 = \nabla_{\Sigma,t}^*\nabla_{\Sigma,t} + \frac{1}{4}s_t,
\end{align}
where $s_t$ is the scalar curvature of the Riemannian manifold $(\Sigma, g_t)$. Then, for the family of Dirac operators to have a nonzero spectral flow, the Dirac operator $\mathcal{D}_{\Sigma,t}$ should have a zero eigenvalue for some $t$, and it is only possible if the corresponding scalar curvature $s_t$ is zero.\par 
Following \cite{kennon2023}, we consider a family of metrics on $\Sigma$, given by
\begin{align}
    g(t) := a(t)^2(\eta_1^2 + \eta_2^2) + b(t)^2\eta_3^2 + c(t)^2\pi^*g_{S^4}
\end{align}
where $t \in \mathbb{R}$, $\pi : S^7 \to S^4$ is a Riemannian submersion, $\eta_1, \eta_2, \eta_3$ are one forms on $S^7$. Then, for this family, $a = b = c = 1$ corresponds to the round metric, and $a = b = \frac{1}{\sqrt{5}}, c = 1$ corresponds to the squashed metric.\par 
\begin{lem}\cite{kennon2023}
    The Ricci curvature $\Ric(t)$ of the family $g(t)$ is given by
    \begin{align}\label{ric}
        2\left(2 - \frac{b^2}{a^2} + \frac{2a^4}{c^4}\right)(\eta_1^2 + \eta_2^2) + 2\left(\frac{b^4}{a^4} + \frac{2b^4}{c^4}\right)\eta_3^2 + 2\left(6 - \frac{2a^2+b^2}{c^2}\right)\pi^*g_{S^4}.
    \end{align}
\end{lem}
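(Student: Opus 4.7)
The plan is to compute $\Ric(t)$ by viewing $(\Sigma, g(t))$ as the total space of the Hopf Riemannian submersion $\pi : S^7 \to S^4$ equipped with a scaled fiber metric (a Berger three-sphere) and a scaled base metric, and then applying O'Neill's submersion formulas.

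First, I would set up an adapted orthonormal coframe. Let $\omega_1 = a\,\eta_1$, $\omega_2 = a\,\eta_2$, $\omega_3 = b\,\eta_3$ span the vertical distribution, and let $\theta_1,\dots,\theta_4$ be horizontal so that $\sum \theta_i^2 = c^2\pi^* g_{S^4}$. The vertical part is the Berger metric $a^2(\eta_1^2+\eta_2^2)+b^2\eta_3^2$ on the $S^3$-fibers, whose Ricci tensor is classical and has eigenvalues $\frac{2b^2}{a^2}$ on the $\eta_3$-direction and $2 - \frac{b^2}{a^2}$ on the $\eta_1, \eta_2$-directions (after normalizing so the round case gives $2$). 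The base is $(S^4, c^2 g_{S^4})$, whose Ricci tensor is $\frac{3}{c^2}(c^2 g_{S^4}) = 3\pi^*g_{S^4}$.

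Next I would compute the O'Neill integrability tensor $A$ from the curvature of the Hopf connection. On $S^7$ with round metric, the Hopf connection has curvature equal to a self-dual $\mathfrak{sp}(1)$-valued 2-form on $S^4$ of constant norm; the structure equations give $d\eta_i = -\eta_j\wedge\eta_k + \pi^*\Omega_i$ where $\Omega_i$ are the three self-dual harmonic 2-forms on $S^4$. Under the rescaling, the orthonormal components of $A$ pick up factors $a/c^2$ (for $\omega_1,\omega_2$) and $b/c^2$ (for $\omega_3$). O'Neill's formulas then give
\begin{align*}
\Ric(V,V) &= \Ric^{\text{fib}}(V,V) + (\text{trace of } A^*A \text{ on } V),\\
\Ric(H,H) &= \pi^*\Ric^{S^4}(H,H) - 2\langle A_H, A_H\rangle,\\
\Ric(V,H) &= 0
\end{align*}
by the $Sp(1)_u\times Sp(1)_d$-invariance of the setup. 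Combining the fiber Ricci, base Ricci, and the explicit $|A|^2$-contributions (which produce exactly the terms $\frac{2a^4}{c^4}$, $\frac{2b^4}{c^4}$, and $-\frac{2a^2+b^2}{c^2}$ in the three blocks) yields the asserted formula (\ref{ric}).

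The main obstacle is bookkeeping the scaling factors in the O'Neill $A$-tensor: rescaling horizontal vectors by $c$ and vertical vectors by $a$ or $b$ multiplies the orthonormal components of $A$ by factors depending on all three parameters, and these enter the Ricci expression both through the $|A|^2$ term upstairs and through the horizontal-horizontal contribution. I would check the answer by verifying the two limiting cases: at $a=b=c=1$ the formula must reduce to $\Ric = 6\,g$ (round $S^7$ with sectional curvature $1$), and at $a=b=\tfrac{1}{\sqrt{5}}$, $c=1$ it must give the Einstein condition $\Ric = \tfrac{54}{5}g$ characterising the squashed nearly parallel $G_2$-metric, both of which follow from (\ref{ric}) by direct substitution.
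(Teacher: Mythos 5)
The paper does not prove this lemma: it is stated as a citation to \cite{kennon2023}, so there is no in-paper argument to compare against. Your proposed route --- treat $(\Sigma, g(t))$ as a canonical variation of the Hopf submersion $\pi : S^7 \to S^4$ with Berger-$S^3$ fibres, and apply O'Neill's Ricci formulas --- is the standard method for such computations and is indeed the one used in \cite{kennon2023}. Your consistency checks at $a=b=c=1$ and at $a=b=\tfrac{1}{\sqrt{5}},\, c=1$ are both correct: one recovers $\Ric = 6g$ for the round $S^7$ and $\Ric = \tfrac{54}{5}g$ for the metric homothetic to the squashed nearly parallel $G_2$-metric, and the latter matches the scalar curvature $\tfrac{378}{5}$ quoted later in the section.

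However, two of the intermediate quantities you quote carry normalisation errors that would prevent the computation from closing if carried through literally. First, the base metric $g_{S^4}$ appearing in the canonical variation of the unit $S^7$ is the Fubini--Study metric on $\mathbb{HP}^1$, a round $4$-sphere of radius $\tfrac12$, so $\Ric_{g_{S^4}} = 12\,g_{S^4}$, not $3\,g_{S^4}$; this is exactly what produces the $2 \cdot 6$ in the third block of \eqref{ric}. Second, your stated Berger-fibre eigenvalues $2 - \tfrac{b^2}{a^2}$ and $\tfrac{2b^2}{a^2}$ are unequal at $a=b$, contradicting the isotropy of the round $S^3$. With the structure equations normalised so that $a=b=1$ gives $\Ric_{S^3} = 2g$, the fibre Ricci written as a form is $2\bigl(2 - \tfrac{b^2}{a^2}\bigr)(\eta_1^2 + \eta_2^2) + 2\tfrac{b^4}{a^4}\eta_3^2$; your first term is correct but the second should be $\tfrac{b^4}{a^4}$ inside the parentheses, not $\tfrac{2b^2}{a^2}$. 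These are precisely the scaling pitfalls you flag as the main obstacle; once the orthonormal coframe $\omega_1=a\eta_1,\ \omega_2=a\eta_2,\ \omega_3=b\eta_3,\ \theta_i=c\,\pi^*\vartheta_i$ and the $A$-tensor components proportional to $a/c^2$, $b/c^2$ are inserted consistently, the O'Neill formulas reproduce \eqref{ric}.
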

It is interesting to note that the Ricci flow for the family $g(t)$ is well defined, and the only two critical points correspond to the round and squashed metrics respectively \cite{kennon2023}.\par
We can easily calculate the scalar curvature of this family to be
\begin{align*}
    &\ \ 4\left(2 - \frac{b^2}{a^2} + \frac{2a^4}{c^4}\right)\frac{1}{a^2} + 2\left(\frac{b^4}{a^4} + \frac{2b^4}{c^4}\right)\frac{1}{b^2} + 8\left(6 - \frac{2a^2+b^2}{c^2}\right)\frac{1}{c^2}\\
    &= \frac{1}{a^4}(8a^2 - 2b^2) + \frac{1}{c^4}(48c^2 - 8a^2 - 4b^2).
\end{align*}
For $a = b = c = 1$, we have the scalar curvature of the round metric, given by $42$; and for $a = b = \frac{1}{\sqrt{5}}, c = 1$, we have the scalar curvature of the squashed metric, given by $378/5$.\par
Now, we consider a simpler family of Riemannian metrics \begin{align}
    \widetilde{g}(t) := a(t)^2(\eta_1^2 + \eta_2^2 + \eta_3^2) + \pi^*g_{S^4}
\end{align}
where $a(-\infty) = 1$ and $a(\infty) = 1/5$. The corresponding family of scalar curvatures is given by $\frac{6}{a^2} + 48 - 12a^2$, which we note to be always nonzero positive for $a \in \left[\frac{1}{\sqrt{5}}, 1\right]$.\par
Then, since a spectral flow of metrics, from the round sphere to the squashed sphere, being a topological invariant, does not depend on the path, proves that
\begin{equation}\label{spflowmetric}
    \Sf\left(\left\{\mathcal{D}_{\Sigma,t}\right\}_{t \in \mathbb{R}}\right) = 0.
\end{equation}

\subsection{Index of the Dirac Operator Twisted by the One Parameter Family of Instantons}
From (\ref{spflow2}), (\ref{spflowmetric}) and (\ref{etainv6.4}), we have the second term in (\ref{APS6.2}), given by
\begin{align}\label{etainv6.5}
    \frac{1}{2}\eta(\slashed{\mathfrak{D}}_{A_\Sigma}, \Sigma^7) = - I(\slashed{\mathfrak{D}}_A^-, \slashed{C}) - I(\mathcal{D}^-, C_\Sigma).
\end{align}
where $\slashed{\mathfrak{D}}_A^-$ is the Dirac operator on the cylinder $\slashed{C}$ with squashed spheres as boundaries, and $\mathcal{D}^-$ is the Dirac operator on the cylinder $C_\Sigma$ with squashed and round spheres as boundaries. To calculate first term, $I(\slashed{\mathfrak{D}}_{\overline{A},{CI}}^-, \slashed{S}^-(S^4), g_{CI})$ in (\ref{APS6.2}), we split $\slashed{S}^-(S^4)$ in two parts, $\slashed{C}$, and the complement $B_\Sigma$.\par
\begin{figure}[H]
	\centering
	\begin{tikzpicture}
		\draw[dashed] (4,0) ellipse (0.3cm and 1cm);
        \draw (0,0) ellipse (0.3cm and 1cm);
		\draw (0,1) -- (4,1);
		\draw (0,-1) -- (4,-1);
		\draw (0,1) arc[start angle=90, end angle=270,radius=1cm];
		\draw (-1.8,0) node {$r = 0$};
        \draw (-1,-1) node {$S^4$};
		\draw (4,-1.3) node {$\Sigma^7$};
        \draw (0,-1.3) node {$\Sigma^7$};
        \draw (-1,-1.9) node {$A_0$};
		\draw (0,-1.9) node {$A_0$};
        \draw (4,-1.9) node {$A_\Sigma$};
        \draw (2,-2.4) node {$\slashed{C}$};
        \draw (-0.5,-2.4) node {$B_\Sigma$};
		\draw (5,0) node {$r \to \infty$};
	\end{tikzpicture}
 \caption{$\slashed{S}^-(S^4) = B_\Sigma \amalg \slashed{C}$.}
	\label{fig:cigarsplit}
\end{figure}\noindent
Hence,
\begin{align}\label{isplit}
    I(\slashed{\mathfrak{D}}_{\overline{A},{CI}}^-, \slashed{S}^-(S^4), g_{CI}) = I(\mathcal{D}^-,B_\Sigma) + I(\slashed{\mathfrak{D}}_A^-, \slashed{C}).
\end{align}
Then, from (\ref{APS6.2}), (\ref{isplit}) and (\ref{etainv6.5}), we have,
\begin{align}\label{indtom}
    \Ind (\slashed{\mathfrak{D}}_{\overline{A},{CI}}^-, \slashed{S}^-(S^4), g_{CI}) &= I(\mathcal{D}^-,B_\Sigma) - I(\mathcal{D}^-, C_\Sigma)\nonumber\\
    &= I(\mathcal{D}^-,B_\Sigma) + I(\mathcal{D}^-, \overline{C}_\Sigma)\nonumber\\
    &= I(\mathcal{D}^-,B_\Sigma \amalg \overline{C}_\Sigma)\nonumber\\
    &= I(\mathcal{D}^-,M_\Sigma),
\end{align}
where $\overline{C}_\Sigma$ is $C_\Sigma$ with opposite orientation, and the manifold $M_\Sigma$ is diffeomorphic to $B_\Sigma \amalg \overline{C}_\Sigma$.
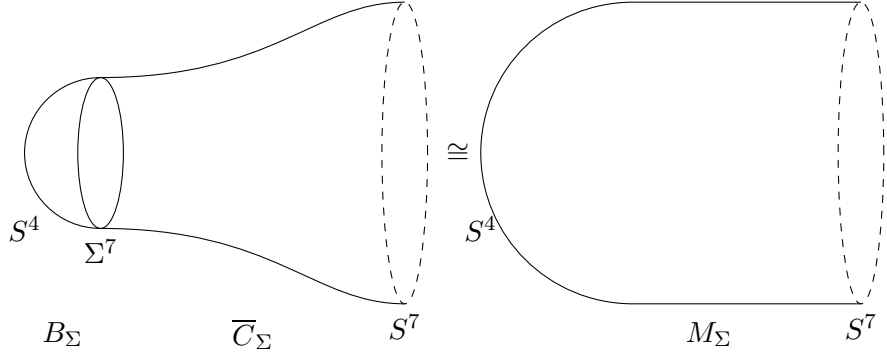
\begin{figure}[H]
	\centering
	\begin{tikzpicture}
        \draw (0,0) ellipse (0.3cm and 1cm);
		\draw (0,1) arc[start angle=90, end angle=270,radius=1cm];
        \draw [dashed] (4,0) ellipse (0.3cm and 2cm);   
		\draw (0,1) .. controls (2.3,1) and (2.8,2) .. (4,2);
		\draw (0,-1) .. controls (2.3,-1) and (2.8,-2) .. (4,-2);
		\draw (4,-2.3) node {$S^7$};;
        \draw (-1,-1) node {$S^4$};
        \draw (0,-1.3) node {$\Sigma^7$};
        \draw (2,-2.4) node {$\overline{C}_\Sigma$};
        \draw (-0.5,-2.4) node {$B_\Sigma$};
        \draw (4.7,0) node {$\cong$};
		\draw (7,2) arc[start angle=90, end angle=270,radius=2cm];
        \draw [dashed] (10,0) ellipse (0.3cm and 2cm);   
		\draw (7,2) -- (10,2);
		\draw (7,-2) -- (10,-2);
		\draw (10,-2.3) node {$S^7$};;
        \draw (5,-1) node {$S^4$};
        \draw (8,-2.4) node {$M_\Sigma$};
	\end{tikzpicture}
 \caption{$B_\Sigma \amalg \overline{C}_\Sigma \cong M_\Sigma$.}
	\label{fig:indcylin}
\end{figure}\noindent
Consider the $8$-dimensional ball $D^8 := \{x \in \mathbb{R}^8 : |x| \leq R \text{ for some }R>0\}$ equipped with the cigar metric $g_{CI}$. Then, it can be easily proved that
\begin{align}\label{connsum1}
    M_\Sigma \#_\partial D^8 \cong \mathbb{H}P^2.
\end{align}
where the boundary gluing is defined by $X \#_\partial Y := X \amalg Y /{\partial X \sim \partial Y}$.
\begin{figure}[H]
	\centering
	\begin{tikzpicture}
		\draw (2,2) arc[start angle=90, end angle=270,radius=2cm];
        \draw (5,0) ellipse (0.3cm and 2cm);   
		\draw (2,2) -- (5,2);
		\draw (2,-2) -- (5,-2);
        \draw (5,-2) arc[start angle=-90, end angle=90,radius=2cm];
		\draw (5,-2.3) node {$S^7$};;
        \draw (-0.5,0) node {$S^4$};
        \draw (7.5,0) node {$\{0\}$};
        \draw (7,0) node {$\bullet$};
        \draw (0,0) node {$\bullet$};
        \draw (5.8,0) node {$D^8$};
        \draw (3.6,0) node {$M_\Sigma$};
	\end{tikzpicture}
 \caption{$M_\Sigma \#_\partial D^8 \cong \mathbb{H}P^2$.}
	\label{fig:HP2}
\end{figure}
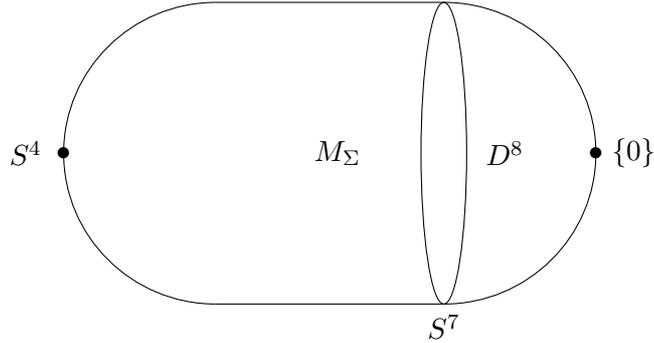\noindent
Let $a \in H^4(\mathbb{H}P^2, \mathbb{Z})$ be a generator such that $\displaystyle \int_{\mathbb{H}P^2} a^2 = 1$. Then, (see \cite{milnor1974char})
$$p_1(\mathbb{H}P^2) = 2a,\ \ p_2(\mathbb{H}P^2) = 7a^2.$$
Hence,
\begin{align*}
    \Ind(\mathcal{D}^-, \mathbb{H}P^2) &= I(\mathcal{D}^-, \mathbb{H}P^2)\\
    &= - \frac{1}{5760}\dim\mathfrak{g}\int_{\mathbb{H}P^2}(7p_1(\mathbb{H}P^2)^2 - 4p_2(\mathbb{H}P^2))\\
    &= 0,
\end{align*}
where the formula for the index is obtained from (\ref{iterm}) by substituting $p_1(\mathfrak{g}_P) = p_2(\mathfrak{g}_P) = 0$, for the trivial connection. Hence,
\begin{align*}
    I(\mathcal{D}^-, \mathbb{H}P^2) = I(\mathcal{D}^-, M_\Sigma) + I(\mathcal{D}_\Sigma^-, D^8) = 0.
\end{align*}
But, from the calculations in \cite{me2023paper}, we know that $p_1(D^8) = p_2(D^8) = 0$, which implies $I(\mathcal{D}^-, D^8) = 0$. Hence, 
\begin{align}\label{im}
    I(\mathcal{D}^-, M_\Sigma) = 0.
\end{align}
From (\ref{indtom}), (\ref{im}) and (\ref{indrate2}), we have
\begin{align}\label{ind-5/2,2}
    \Ind_{-\frac{5}{2}} (\slashed{\mathfrak{D}}_{A_{y_0}}^-, \slashed{S}^-(S^4), g) = \Ind (\slashed{\mathfrak{D}}_{A_{y_0},{CI}}^-, \slashed{S}^-(S^4), g_{CI}) = 0.
\end{align}
Hence, the index of the twisted Dirac operator twisted by the one parameter family of instanton $A_{y_0}$ corresponding to the rate $-5/2$ is $0$.

\subsection{Index of the Dirac Operator Twisted by the Limiting Instantons}
Now, we want to compute the index of the twisted Dirac operator twisted by the limiting instanton $A_{\text{lim}}$. The index is given by
\begin{align}\label{APS7.2}
    &\ \ \ind\left(\slashed{\mathfrak{D}}_{A_{\text{lim}},{CI}}^-, \slashed{S}^-(S^4), g_{CI}\right)\nonumber\\
    &= I\left(\slashed{\mathfrak{D}}_{A_{\text{lim}},{CI}}^-, \slashed{S}^-(S^4), g_{CI}\right) + \frac{1}{2}\eta(\slashed{\mathfrak{D}}_{A_\Sigma}, \Sigma^7)\nonumber\\
    &= I\left(\slashed{\mathfrak{D}}_{A_{\text{lim}},{CI}}^-, \slashed{S}^-(S^4), g_{CI}\right)\nonumber\\
    &\ \ \ \ + \ind\left(\slashed{\mathfrak{D}}_{A_{y_0},{CI}}^-, \slashed{S}^-(S^4), g_{CI}\right) - I\left(\slashed{\mathfrak{D}}_{A_{y_0},{CI}}^-, \slashed{S}^-(S^4), g_{CI}\right)\nonumber\\
    &= I\left(\slashed{\mathfrak{D}}_{A_{\text{lim}},{CI}}^-, \slashed{S}^-(S^4), g_{CI}\right) - I\left(\slashed{\mathfrak{D}}_{A_{y_0},{CI}}^-, \slashed{S}^-(S^4), g_{CI}\right)\nonumber\\
    &= I\left(\slashed{\mathfrak{D}}_{\widetilde{A},{CI}}^-, \mathbb{H}P^2_{(1)} \#\ \overline{\mathbb{H}P^2_{(2)}}, g_{CI}\right)\nonumber\\
    &= -\frac{1}{12}\int\limits_{\mathbb{H}P^2_{(1)} \#\ \overline{\mathbb{H}P^2}_{(2)}}\left(p_1(\mathfrak{g}_{\widetilde{P}})^2 - 2p_2(\mathfrak{g}_{\widetilde{P}})\right)+ \frac{1}{24}\int\limits_{\mathbb{H}P^2_{(1)} \#\ \overline{\mathbb{H}P^2}_{(2)}}p_1(\mathbb{H}P^2_{(1)} \#\ \overline{\mathbb{H}P^2}_{(2)})p_1(\mathfrak{g}_{\widetilde{P}})
\end{align}
where, we have used (\ref{connsum1}) to obtain $\slashed{S}^-(S^4) \#_\partial \overline{\slashed{S}^-(S^4)} \cong \mathbb{H}P^2 \#\ \overline{\mathbb{H}P^2}$. For convenience, we have introduced the notations $\slashed{S}^-(S^4)_{(1)} \#_\partial \overline{\slashed{S}^-(S^4)}_{(2)} \cong \mathbb{H}P^2_{(1)} \#\ \overline{\mathbb{H}P^2}_{(2)}$ to distinguish the two different copies in the connected sum. Here the connection $\widetilde{A}$ on the bundle $\mathfrak{g}_{\widetilde{P}}$ over $\mathbb{H}P^2_{(1)} \#\ \overline{\mathbb{H}P^2_{(2)}}$ is $A_{y_0}$ glued to $A_{\text{lim}}$. 
\begin{figure}[H]
	\centering
	\begin{tikzpicture}
		\draw[dashed] (2,0) ellipse (0.3cm and 1cm);
		\draw (0,1) -- (2,1);
		\draw (0,-1) -- (2,-1);
		\draw (0,1) arc[start angle=90, end angle=270,radius=1cm];

        \draw[dashed] (3,0) ellipse (0.3cm and 1cm);
		\draw (3,1) -- (5,1);
		\draw (3,-1) -- (5,-1);
		\draw (5,1) arc[start angle=90, end angle=-90,radius=1cm];

        \draw (1,1.5) node {$A_{\text{lim}}$};
		\draw (4,1.5) node {$A_{y_0}$};
        \draw (1,-1.5) node {$\slashed{S}^-(S^4)$};
        \draw (4,-1.5) node {$\overline{\slashed{S}^-(S^4)}$};
	\end{tikzpicture}
    \hspace{1cm}
    \begin{tikzpicture}
		\draw (0,1) -- (2,1);
		\draw (0,-1) -- (2,-1);
		\draw (0,1) arc[start angle=90, end angle=270,radius=1cm];

		\draw (2,1) -- (4,1);
		\draw (2,-1) -- (4,-1);
		\draw (4,1) arc[start angle=90, end angle=-90,radius=1cm];
        \draw[dotted] (2,0) ellipse (0.3cm and 1cm);

        \draw (1,1.5) node {$A_{\text{lim}}$};
		\draw (3,1.5) node {$A_{y_0}$};
        \draw (2,-1.5) node {$\mathbb{H}P^2 \#\ \overline{\mathbb{H}P^2}$};

	\end{tikzpicture}
 \caption{$\slashed{S}^-(S^4) \#_\partial \overline{\slashed{S}^-(S^4)} \cong \mathbb{H}P^2 \#\ \overline{\mathbb{H}P^2}$.}
	\label{fig:hp2sumhp2}
\end{figure}\noindent
Clearly, $H^4(\mathbb{H}P^2_{(1)}) \oplus H^4(\mathbb{H}P^2_{(2)}) \cong H^4(\mathbb{H}P^2_{(1)} \#\ \overline{\mathbb{H}P^2}_{(2)})$.\par
Now, the map $\iota: S^4 \hookrightarrow \mathbb{H}P^2$ induces an isomorphism
\begin{align}\label{cohomoiso}
    \iota^* : H^4(\mathbb{H}P^2) \to H^4(S^4).
\end{align}
Then,
\begin{align}\label{manipont}
    p_1(\mathbb{H}P^2_{(1)} \#\ \overline{\mathbb{H}P^2_{(2)}}) = p_1(\mathbb{H}P^2_{(1)}) + p_1(\overline{\mathbb{H}P^2}_{(2)}) = 4a_1 + 4a_2
\end{align}
where $a_i$ is a generator of $\mathbb{H}P^2_{(i)}$ with $\displaystyle\int_{\mathbb{H}P^2_{(i)}} a_i^2 = 1$ for $i = 1,2$.\par
Let
\begin{align}\label{1stpont}
    p_1(\mathfrak{g}_{\widetilde{P}}) = n_1a_1 + n_2a_2 \in H^4(\mathbb{H}P^2_{(1)}) \oplus H^4(\mathbb{H}P^2_{(2)}).
\end{align}
Hence, from (\ref{APS7.2}), (\ref{1stpont}) and (\ref{manipont}), we have,
\begin{align}\label{limind}
    &\ \ \ind\left(\slashed{\mathfrak{D}}_{A_{\text{lim}},{CI}}^-, \slashed{S}^-(S^4), g_{CI}\right)\nonumber\\
    &= -\frac{1}{12}\int\limits_{\mathbb{H}P^2_{(1)} \#\ \overline{\mathbb{H}P^2}_{(2)}}\left(p_1(\mathfrak{g}_{\widetilde{P}})^2 - 2p_2(\mathfrak{g}_{\widetilde{P}})\right)+ \frac{1}{24}\int\limits_{\mathbb{H}P^2_{(1)} \#\ \overline{\mathbb{H}P^2}_{(2)}}p_1(\mathbb{H}P^2_{(1)} \#\ \overline{\mathbb{H}P^2}_{(2)})p_1(\mathfrak{g}_{\widetilde{P}})\nonumber\\
    &= -\frac{1}{12}\int\limits_{\mathbb{H}P^2_{(1)} \#\ \overline{\mathbb{H}P^2}_{(2)}}(n_1a_1 + n_2a_2)^2 + \frac{1}{24}\int\limits_{\mathbb{H}P^2_{(1)} \#\ \overline{\mathbb{H}P^2}_{(2)}}(4a_1 + 4a_2)(n_1a_1 + n_2a_2)\nonumber\\
    &= \frac{1}{12}\int\limits_{\mathbb{H}P^2_{(1)} \#\ \overline{\mathbb{H}P^2}_{(2)}}\left[(2n_1-n_1^2)a_1^2 + (2n_2-n_2^2)a_2^2\right]\nonumber\\
    &= \frac{1}{12}\left[(2n_1-n_1^2) + (2n_2-n_2^2)\right]
\end{align}
Now, consider a connection $\widehat{A}$ on a rank $2$ vector bundle $E$ on $\mathbb{H}P^2_{(1)} \#\ \overline{\mathbb{H}P^2}_{(2)}$ such that $E$ restricted to $\slashed{S}^-(S^4)_{(1)}$ is the instanton $A_{\text{lim}}$ on the bundle $E_1 := \pi^*(\slashed{S}^-(S^4))$ for $\pi : \slashed{S}^-(S^4)\setminus S^4 \to S^4$ and to $\slashed{S}^-(S^4)_{(2)}$ is the instanton $A_{y_0}$ on the trivial rank $2$ bundle $E_2$. Then, $\widehat{A}|_{S^4}$ for $S^4 \subset \slashed{S}^-(S^4)_{(1)}$ is the connection on $\slashed{S}^-(S^4)$ induced by the Levi-Civita connection of the anti-self-dual, Einstein metric on $S^4$ and $\widehat{A}|_{S^4}$ for $S^4 \subset \slashed{S}^-(S^4)_{(2)}$ is the flat connection on the trivial rank $2$ bundle on $S^4$ \cite{clarke-oli2020instantons}. Then, by the isomorphism (\ref{cohomoiso}), the Chern classes of $E_1$ and $E_2$ are given by $c_1(E_1) = 0, c_2(E_1) = -1$ \cite{donaldson1990geometry} and $c_1(E_2) = c_2(E_2) = 0$. This shows that
$$p_1(E) = c_2(E) = (-1)a_1 + 0 \cdot a_2 \in H^4(\mathbb{H}P^2_{(1)}) \oplus H^4(\mathbb{H}P^2_{(2)}) \cong H^4(S^4) \oplus H^4(S^4).$$
Now, clearly, $\mathfrak{g}_{\widetilde{P}} \cong \sym^2E$. Moreover, $c_2(\sym^2E) = 2c_1^2(E) + 4 c_2(E)$ \cite{Fulton1998}. Hence, 
$$c_1(\mathfrak{g}_{\widetilde{P}}) = 0, p_1(\mathfrak{g}_{\widetilde{P}}) = c_2(\mathfrak{g}_{\widetilde{P}}) = -4 \cdot a_1.$$
Hence, $n_1 = -4$ and $n_2 = 0$ in (\ref{1stpont}). Thus, from (\ref{limind}), we have,
\begin{align}\label{ind-5/2,2lim}
    \Ind_{-\frac{5}{2}} (\slashed{\mathfrak{D}}_{A_{\text{lim}}}^-, \slashed{S}^-(S^4), g) = \Ind (\slashed{\mathfrak{D}}_{A_{\text{lim}},{CI}}^-, \slashed{S}^-(S^4), g_{CI}) = -2.
\end{align}
Hence, the index of the twisted Dirac operator twisted by the limiting instanton $A_{\text{lim}}$ corresponding to the rate $-5/2$ is $-2$.
\subsection{Virtual Dimension of the Moduli Spaces}
The main result on the deformations of Clarke--Oliveira's instantons is given by the following theorem.
\begin{thm}\label{CODthm}
	The virtual dimension of the moduli space of Clarke--Oliveira's one parameter family of instantons $A_{y_0}$ with decay rate $\nu \in (-2, 0)$ is given by
	\begin{align}
		\virtualdim \mathcal{M}_{y_0}(A_\Sigma, \nu) = 
			1.
	\end{align}
    The virtual dimension of the moduli space of Clarke--Oliveira's limiting instanton $A_{\text{lim}}$ with decay rate $\nu \in (-2, 0)$ is given by
	\begin{align}
		\virtualdim \mathcal{M}_{\text{lim}}(A_\Sigma, \nu) = 
			-1.
	\end{align}
\end{thm}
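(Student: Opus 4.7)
The plan is to combine the abstract deformation theory of Theorem \ref{mainthm} with the index computations at weight $-5/2$ already carried out in equations (\ref{ind-5/2,2}) and (\ref{ind-5/2,2lim}), bridged by the index-jump formula of Theorem \ref{indjump}.

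First, I would invoke Theorem \ref{mainthm} to reduce the geometric problem to a Fredholm index calculation: for $\nu \in (-2,0) \subset (-6,0)$ which (as verified below) avoids $\mathscr{D}(\slashed{\mathfrak{D}}_A^-)$, the virtual dimension of $\mathcal{M}_{y_0}(A_\Sigma,\nu)$ (resp.\ $\mathcal{M}_{\text{lim}}(A_\Sigma,\nu)$) equals $\ind_\nu \slashed{\mathfrak{D}}_{A_{y_0}}^-$ (resp.\ $\ind_\nu \slashed{\mathfrak{D}}_{A_{\text{lim}}}^-$). So it suffices to compute these two indices for any such rate $\nu$.

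Next, I would verify Fredholmness and apply the index-jump formula. For any $\nu \in (-2,0)$ one has $\nu + \tfrac{5}{2} \in (\tfrac{1}{2},\tfrac{5}{2})$, and by Corollary \ref{eigdircor6} this interval is disjoint from $\Spec \slashed{\mathfrak{D}}_{A_\Sigma}$; likewise $0 \notin \Spec \slashed{\mathfrak{D}}_{A_\Sigma}$, so $\slashed{\mathfrak{D}}_A^-$ is Fredholm at both rates $\nu$ and $-\tfrac{5}{2}$. Applying Theorem \ref{indjump} between these two weights, the only critical weight in $(-\tfrac{5}{2},\nu)$ is $\lambda = -2$, corresponding to the unique eigenvalue $\lambda + \tfrac{5}{2} = \tfrac{1}{2}$ of $\slashed{\mathfrak{D}}_{A_\Sigma}$ in the relevant range. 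Thus
\begin{equation*}
    \ind_\nu \slashed{\mathfrak{D}}_A^- - \ind_{-5/2}\slashed{\mathfrak{D}}_A^- = \dim\ker\bigl(\slashed{\mathfrak{D}}_{A_\Sigma} - \tfrac{1}{2}\bigr).
\end{equation*}

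From the proof of Theorem \ref{eigdir6}, the eigenvalue $\tfrac{1}{2}$ arises only from the trivial representation $V_\gamma = V_{(0,0,0)}$, with $\dim \Hom(V_{(0,0,0)}, \Delta \otimes W_{(0,2)})^{Sp(1)_u \times Sp(1)_d} = 1$ and $\dim V_{(0,0,0)} = 1$, so Frobenius reciprocity (\ref{frob2}) gives total multiplicity $1$. Combining with $\ind_{-5/2}\slashed{\mathfrak{D}}_{A_{y_0}}^- = 0$ from (\ref{ind-5/2,2}) and $\ind_{-5/2}\slashed{\mathfrak{D}}_{A_{\text{lim}}}^- = -2$ from (\ref{ind-5/2,2lim}), one obtains $\ind_\nu \slashed{\mathfrak{D}}_{A_{y_0}}^- = 0 + 1 = 1$ and $\ind_\nu \slashed{\mathfrak{D}}_{A_{\text{lim}}}^- = -2 + 1 = -1$, which establishes the theorem. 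The only delicate point is to confirm that the representation list in Corollary \ref{replist} is truly exhaustive for eigenvalues in $[-\tfrac{5}{2},\tfrac{5}{2}]$; this is guaranteed by the Casimir lower bound of Theorem \ref{eigbnd2}, so the remaining obstacle is purely the bookkeeping across that bound and the consistency of complex-dimension conventions between the eigenvalue count and the APS-based indices at rate $-\tfrac{5}{2}$.
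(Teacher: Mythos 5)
Your proposal is correct and follows essentially the same route as the paper: reduce the virtual dimension to an index via Theorem~\ref{mainthm}, compute the index at the reference weight $-5/2$ via APS (equations~(\ref{ind-5/2,2}) and~(\ref{ind-5/2,2lim})), and then shift to $\nu \in (-2,0)$ using the index-jump formula of Theorem~\ref{indjump} together with Corollary~\ref{eigdircor6}, which isolates the single eigenvalue $1/2$ of multiplicity one. The paper's own proof is simply a more condensed version of exactly these steps.
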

\begin{proof}
     The index of the Dirac operator $\slashed{\mathfrak{D}}_{A_{y_0}}^-$ corresponding to the rate $-5/2$ is zero, which follows from (\ref{ind-5/2,2}). Moreover, from Corollary \ref{eigdircor6}, it follows that the only critical rate between $-5/2$ and $0$ is $-2$, corresponding to the eigenvalue $1/2$. Then, the result follows from the fact that the eigenspace of the eigenvalue $1/2$ is $1$-dimensional using Theorem \ref{indjump}.\par
     Similarly, since the index of the Dirac operator $\slashed{\mathfrak{D}}_{A_{\text{lim}}}^-$ corresponding to the rate $-5/2$ is $-2$, which follows from (\ref{ind-5/2,2lim}), applying the same argument, the result follows.
\end{proof}
\begin{rem}
    We note that the deformation of the one parameter family of instantons described in theorem \ref{CODthm} comes from the parameter $y_0$ in the expression of $\varphi(r)$ (\ref{valueofphi}) in Clarke--Oliveira's instanton (\ref{COinst}). Moreover, since the virtual dimension of the moduli space of the limiting instanton is negative, it is obstructed. Whereas, we still do not know whether Clarke--Oliveira's one parameter family of instantons $A_{y_0}$ is unobstructed, but we expect the instantons to be unobstructed, which will be investigated in a future project.
\end{rem}

\bibliographystyle{plain}
\nocite {*}
\bibliography{references}
\addcontentsline{toc}{section}{References}

\end{document}